\theoremstyle{plain}
\newtheorem{theorem}{Theorem}[section]
\newtheorem{corollary}[theorem]{Corollary}
\newtheorem{lemma}[theorem]{Lemma}
\newtheorem{example}[theorem]{Example}
\newtheorem{proposition}[theorem]{Proposition}
\newtheorem{definition-lemma}[theorem]{Definition-Lemma}
\theoremstyle{remark}
\newtheorem{remark}[theorem]{Remark}
\theoremstyle{definition}
\def\>{\geq}
\newcommand{\mbQ}{\mathbb{Q}}
\newcommand{\mbR}{\mathbb{R}}
\newcommand{\Q}{{\mathbb{Q}}}
\newcommand{\R}{{\mathbb{R}}}
\newcommand{\mbC}{\mathbb{C}}
\DeclareMathOperator{\BirMori}{BirMori}
\DeclareMathOperator{\BirAut}{BirAut}
\def\mbF{\mathbb{F}}
\def\mbP{\mathbb{P}}
\def\>{\geq}
\def\dim{\operatorname{dim}}
\def\rank{\operatorname{rank}}
\theoremstyle{definition}
\newtheorem{definition}[theorem]{Definition}
\theoremstyle{definition}
\numberwithin{equation}{section}
\theoremstyle{remark}
\author{Roktim Mascharak}
\address{Physics Building, UCL, Gower St, London WC1E 6BT}
\email{roktim.mascharak.23@ucl.ac.uk}
\title[ Foliated Sarkisov Program on threefolds ]{ On the Log Sarkisov Program for foliations on projective threefolds}
\begin{document}
\maketitle
\begin{abstract}
     In this article, we prove the Sarkisov Program for co-rank one foliations with suitable singularities on normal projective threefolds. We also exibit a weaker version of birational super-rigidity between two foliated Mori fiber spaces with rank one foliations on normal projective threefolds.  
\end{abstract}
\tableofcontents

\section{Introduction}
In birational geometry, the MMP conjecture states that starting with a projective variety, the MMP terminates with either a minimal model (i.e., a variety with a nef canonical divisor) or a Mori fiber space (i.e., a fibration $f:X\rightarrow Z$ of relative Picard rank $1$ whose anti-canonical divisor is relatively ample). In general, these end products are not unique. The aim of the Sarkisov Program is to describe the rational map between two Mori fiber spaces (which are the end results of different MMPs starting from the same variety with non pseudoeffective
canonical divisor) in a more concrete manner; more precisely, to decompose this map into a composition of ``Sarkisov links''. \\

For three-dimensional normal quasi-projective varieties with a co-rank one foliation, the existence and termination of the MMP for an F-dlt foliated pair $(\mathcal{F},\Delta)$ are known due to \cite{CS21} and \cite{SS19}. Note that a $(K_{\mathcal{F}}+\Delta)$-negative Mori fiber space contraction is also a $K_X$-negative Mori fiber space by \cite[Lemma $8.12$]{spicer20}, where $X$ is a normal projective threefold and $(\mathcal{F},\Delta)$ is a co-rank one F-dlt pair. By \cite[Lemma $3.16$]{CS21}, $X$ has klt singularities if $(\mathcal{F},\Delta)$ has F-dlt singularities with $\lfloor \Delta\rfloor=0$. Hence, by \cite{HM09}, we can write the rational map between two Mori fiber spaces induced by two MMPs starting from the same variety in terms of ``classical Sarkisov links''. In this article, we show that this map can also be decomposed in terms of foliated Sarkisov links; this is our main theorem. Before stating the theorem let us begin to define \textbf{foliated Sarkisov links}.
Suppose that $\phi: (X,\mathcal{F}_X) \to S$ and $\psi:(Y,\mathcal{F}_Y) \to T$ are foliated Mori fibre spaces. A Sarkisov link $\sigma: X \dashrightarrow Y$ between $\phi$ and $\psi$ is one of four types:

\begin{center}
\begin{tabular}{cccc}
\textbf{I} & \textbf{II} & \textbf{III} & \textbf{IV} \\
\begin{tikzcd}
X' \arrow[r, dashed] \arrow[d] & Y \arrow[d, "\psi"] \\
X \arrow[d, "\phi"] & T\arrow[dl] \\
S &
\end{tikzcd}
&
\begin{tikzcd}
X' \arrow[r, dashed] \arrow[d] & Y' \arrow[d] \\
X \arrow[d, "\phi"] & Y \arrow[d, "\psi"] \\
S \arrow[r, equal] & T
\end{tikzcd}
&
\begin{tikzcd}
    X' \arrow[r, dashed] \arrow[d, "\phi"] & Y' \arrow[d] \\
    S \arrow[rd] & Y \arrow[d, "\psi"] \\
    & T
\end{tikzcd}
&
\begin{tikzcd}
X \arrow[rr, dashed] \arrow[d, "\phi"] & & Y \arrow[d, "\psi"] \\
S \arrow[rd, "s"] & & T \arrow["t", ld] \\
& R &
\end{tikzcd}
\\
\end{tabular}
\end{center}

There is a divisor $\Xi$ on the space $L$ on the top left (be it $L=X$ or $L=X'$) such that $(L,\mathcal{F}_L,\Xi)$ is F-dlt and $K_{\mathcal{F}_L}+\Xi$ is numerically trivial over the base (be it $S, T,$ or $R$). Every arrow which is not horizontal is an extremal contraction. If the target is $(X,\mathcal{F}_X)$ or $(Y,\mathcal{F}_Y)$, it is a divisorial contraction. The horizontal dotted arrows are compositions of $(K_{\mathcal{F}_L}+ \Xi)$-flops, $L_i \dashrightarrow L_{i+1}$, $1 \le i \le k$, where $L = L_1$. Links of \textbf{type IV} break into two types, IV$_m$ and IV$_s$. For a link of \textbf{type IV$_m$}, both $s:S\rightarrow R$ and $t:T\rightarrow R$ are foliated Mori fibre spaces. For a link of \textbf{type IV$_s$}, both $s$ and $t$ are small birational contractions. In this case $R$ is not $\mathbb{Q}$-factorial; for every other type of link, all varieties are $\mathbb{Q}$-factorial. Note that there is an induced birational map $\sigma: X \dashrightarrow Y$, but not necessarily a rational map between $S$ and $T$.

\begin{theorem}[cf. {\cite[Theorem $1.3$]{HM09}}]\label{MT}
Let $\phi:X\rightarrow S$ and $\psi: Y\rightarrow T$ be two foliated Mori fiber spaces which are the end products of two MMPs starting from $(Z,\mathcal{F},\Phi)$, where $Z$ is a normal projective $\mbQ$-factorial threefold and $(\mathcal{F},\Phi)$ is a co-rank one foliated F-dlt pair on $Z$ with $\lfloor \Phi\rfloor =0$. Then the induced rational map can be written as a composition of the four kinds of foliated Sarkisov links.

\end{theorem}

\begin{remark}
Unlike in the case of classical pairs, if the foliated pair $(\mathcal{F},\Delta)$ has F-dlt singularities and $\lfloor\Delta\rfloor=0$, this does not imply that the pair is foliated log terminal. For example, consider the foliation $\mathcal{F}$ on $\mathbb{A}^2$ given by the one-form $\omega=xdy+ydx$. Blowing up the origin, we obtain an invariant exceptional divisor with foliated discrepancy $a(E,\mathcal{F},0)=0=\epsilon(E)$. This implies that $(\mathcal{F},0)$ has a log canonical center at the origin; however, it is F-dlt.
\end{remark}

The main ingredients required to prove Theorem \ref{MT} are the finiteness of log terminal models for foliations and the log geography of log terminal models, which we develop in the third section of this article (see Theorem \ref{Finiteness} and Theorem \ref{LGLCM}). We adapt techniques from \cite{BCHM} and \cite{HM09}, with some modifications for the case of co-rank one foliations in our setup.\\ 

The classical Sarkisov program has been a very important tool for studying birational automorphism groups of rational varieties (see, e.g., \cite{ISK96}, \cite{BLZ19}, \cite{BFSZ24}, \cite{ED24}). Namely, any birational automorphism of a smooth rational variety (or, more generally, one with log-terminal singularities) can be decomposed into a composition of Sarkisov links. In the next section, we explore the relationship between the foliated Sarkisov program and the birational automorphism groups of foliated varieties by studying some examples.

In the final section, we address the case of a rank one foliation with canonical singularities on a normal projective threefold, where we prove the following theorem relating rank one foliated Mori fiber spaces:

\begin{theorem}\label{MT2}
Let $\phi:X\rightarrow S$ and $\psi:Y\rightarrow T$ be two foliated Mori fiber spaces which are the end products of two MMPs starting from $(Z,\mathcal{F},\Delta)$, where $X,Y$, and $Z$ are normal projective threefolds and $(\mathcal{F},\Delta)$ is a rank one foliated pair on $Z$ with canonical singularities (see Definition \ref{pairs}). Then there exist a non-empty common open set $V$ of $S$ and $T$ such that $\phi^{-1}(V)$ is isomorphic to $\psi^{-1}(V)$ as $V$-schemes. 
\end{theorem}

Theorem \ref{MT2} establishes a special \textbf{birational super rigidity} condition (in the sense of \cite[Section $3$]{DF23}) in the set-up of rank one foliated Mori fiber spaces.\\
The strategy we are using to prove Sarkisov program for co-rank one foliations does not work in the rank one case. This is because some of the core techniques employed in this strategy fail for rank one foliations. For example, Bertini-type theorems no longer hold in this setting. We can observe this in the following example.

\begin{example} 
Consider the fibration $\pi:\mbP^2\times\mbP^1\rightarrow \mbP^2$. Let $l$ be a line in $\mbP^2$ and let $C$ be a curve in $\mbP^2\times\mbP^1$ such that $\pi(C)=l$. Let $X$ be the blow-up of $\mbP^2\times \mbP^1$ along $C$ and let $\mathcal{F}$ be the rank one foliation induced by the projection $X\rightarrow \mbP^2$. Let $\tilde{D}$ be the strict transform of the divisor $\pi^{-1}(l)$ and $E$ be the exceptional divisor surjecting onto $C$. Then every point of $\tilde{C}=\tilde{D}\cap E$ is an F-lc center for the foliation $\mathcal{F}$. Since, for any ample Cartier effective divisor $H$, the intersection $H\cap \tilde{C}$ is non-empty, the pair $(\mathcal{F},H)$ can never be log canonical.
\end{example}

\textit{Acknowledgements.} The author would like to thank his advisors Professor Paolo Cascini and Professor Calum Spicer for suggesting the problem and for many useful discussions. The author would also like to thank Dr. Eduardo Alves da Silva, Professor Hamid Abban, Stefania Vassiliadis and Samuele Ciprietti for helpful advises and the anonymous referee for useful comments and suggestions which improved the quality of this article. This work was supported by the Engineering and Physical Sciences Research Council [EP/S021590/1] and The EPSRC Centre for Doctoral Training in Geometry and Number Theory (The London School of Geometry and Number Theory), University College London.
\section{Preliminaries}
In this article, every variety is defined over $\mbC$. We begin by recalling some basic notions of foliations on a normal variety. For more detailed discussions, we refer the reader to \cite{CS21} and \cite{CS20}.

\begin{definition}
    A foliation on a normal variety $X$ is a coherent subsheaf $\mathcal{F}\subset T_X$ such that:
    \begin{enumerate}
        \item $\mathcal{F}$ is saturated, i.e., $T_X/\mathcal{F}$ is torsion-free.
        \item $\mathcal{F}$ is closed under the Lie bracket.
    \end{enumerate}
    The rank of $\mathcal{F}$ is its rank as a sheaf, and the co-rank is $\dim(X)-\rank(\mathcal{F})$. We define the canonical divisor of the foliation, $K_{\mathcal{F}}$, to be a Weil divisor such that $\mathcal{O}_X(K_{\mathcal{F}})=(\det(\mathcal{F}))^*$. Given a foliation $\mathcal{F}$ of rank $r$ on a normal variety $X$, there is a natural map $\phi':\Omega^{[1]}_X\rightarrow \mathcal{F}^*$ where $\Omega^{[d]}_X=(\wedge^d \Omega_X)^{**}$, which is constructed from the inclusion $\mathcal{F}\subseteq T_X $. Using $\phi'$, we get the following map
    \[
    \phi: (\Omega^{[r]}_X\otimes \mathcal{O}_{X}(-K_{\mathcal{F}}))^{**}\rightarrow \mathcal{O}_X,
    \]
     The co-support of $\phi$ is defined to be the singular locus of the foliation.

    Let $S$ be a subvariety of a normal variety $X$, and let $\mathcal{F}$ be a rank-$r$ foliation on $X$. Then $S$ is said to be invariant under $\mathcal{F}$ if for any open set $U\subset X$ and any section $\partial\in H^0(U,\mathcal{F}_U)$, we have $\partial (I_{S\cap U})\subset I_{S\cap U}$, where $I_{S\cap U}$ is the ideal sheaf of $S\cap U$.
\end{definition}

\begin{definition}[Singularities of foliated pairs]\label{pairs}
    A foliated pair $(\mathcal{F},\Delta)$ on a normal variety $X$ consists of a foliation $\mathcal{F}$ on $X$ and an $\mbR$-divisor $\Delta$ such that $K_{\mathcal{F}}+\Delta$ is $\mbR$-Cartier. Given a birational morphism $\pi:X'\rightarrow X$ from a normal variety $X'$ and a foliated pair $(\mathcal{F},\Delta)$ on $X$, let $\mathcal{F}'$ be the pullback foliation on $X'$. We may write
    \[
    K_{\mathcal{F}'}+\pi_*^{-1}\Delta=\pi^*(K_{\mathcal{F}}+\Delta)+\sum a(E,\mathcal{F},\Delta)E,
    \]
    where the sum runs over all $\pi$-exceptional divisors $E$. We say that $(\mathcal{F},\Delta)$ is terminal (resp. canonical, log terminal, log canonical) if $a(E,\mathcal{F},\Delta)>0$ (resp. $\geqslant 0$, $>-\epsilon(E)$, $\geqslant -\epsilon(E)$) for any birational morphism $\pi:X'\rightarrow X$ and any $\pi$-exceptional divisor $E$ on $X'$. Here, $\epsilon(E)=0$ if $E$ is invariant under $\mathcal{F}'$ and $\epsilon(E)=1$ otherwise.

    Now consider $X$ to be a three-dimensional normal variety and $\mathcal{F}$ to be a co-rank one foliation on $X$. A pair $(\mathcal{F},\Delta)$ is called foliated divisorial log terminal (F-dlt) if:
    \begin{enumerate}
        \item each irreducible component of $\Delta$ is generically transverse to $\mathcal{F}$ and has a coefficient of at most one, and
        \item there exists a foliated log resolution (in the sense of \cite[Definition $3.1$]{CS21}) $\pi:Y\rightarrow X$ of $(\mathcal{F},\Delta)$ which extracts only divisors $E$ with discrepancy $>-\epsilon (E)$.
    \end{enumerate}
\end{definition}

\begin{remark}
    By \cite{CS20}, we know that F-dlt singularities for co-rank one foliations on normal threefolds are always non-dicritical.
\end{remark}

We introduce some classical definitions in the context of foliations which are analogous to those in \cite{BCHM}.

\begin{definition}
    Let $(\mathcal{F},\Delta)$ be a foliated pair on a normal variety $X$, and let $\phi:X\dashrightarrow Y $ be a proper birational contraction. We say that $\phi$ is $(K_{\mathcal{F}}+\Delta)$-negative if for some common resolution $p:W\rightarrow X$ and $q:W\rightarrow Y$, we can write
    \[
    p^*(K_{\mathcal{F}}+\Delta)=q^*(K_{\mathcal{F}'}+\Delta')+E,
    \]
    where $\mathcal{F}'$ is the transformed foliation and $\Delta'$ is the strict transform of $\Delta$ on $Y$, and $E\geqslant 0$ is a $q$-exceptional $\mbR$-divisor whose support contains the strict transform of every $\phi$-exceptional divisor.
\end{definition}

Next, we define the ample model and the log terminal model.
\begin{definition}
    Let $\pi:X\rightarrow U$ be a morphism of normal projective varieties and let $(\mathcal{F},\Delta)$ be a foliated pair with log canonical singularities. Let $\phi:X\dashrightarrow Y$ be a rational contraction of normal varieties over $U$.
    \begin{enumerate}
        \item If $\phi$ is birational, $(K_{\mathcal{F}}+\Delta)$-negative, and $\phi_*(K_{\mathcal{F}}+\Delta)$ is a nef divisor over $U$, we say that $\phi:X\dashrightarrow Y$ is a log terminal model of $(\mathcal{F},\Delta)$ over $U$.
        \item We say that $\phi:X\dashrightarrow Y$ is the ample model of a divisor $D$ on $X$ over $U$ if there is a relatively ample divisor $H$ on $Y$ over $U$ such that if $p:W\rightarrow X$ and $q:W\rightarrow Y$ is a common resolution of $\phi$, then $q$ is a projective contraction and we can write $p^*D\sim_{\mbR,U}q^* H+E$, where $E\geqslant 0$ is the fixed part of the linear system $|p^*D/U|_{\mbR}$.
    \end{enumerate}
\end{definition}

\begin{remark}
    Notice that in the definition of the log terminal model, we do not require the transformed pair on $Y$ to have F-dlt singularities.
\end{remark}

\begin{lemma}\label{CGT1}
    Let $\pi:X\rightarrow U$ be a projective morphism between $\mbQ$-factorial normal quasi-projective threefolds, and let $\mathcal{F}$ be a co-rank one foliation on $X$ with F-dlt singularities. Let $(\mathcal{F},\Delta=A+B)$ be an F-lc pair where $A$ is $\pi$-ample and $B$ is effective. Then, there exists a pair $\Delta'=A'+B'$ such that $(\mathcal{F},\Delta')$ is F-dlt, $K_{\mathcal{F}}+\Delta\sim_{\mbR,U}K_{\mathcal{F}}+\Delta'$, $A'\geq 0$ is $\pi$-ample, $B'\geqslant0$ is effective, and $\lfloor \Delta'\rfloor=0$. 
\end{lemma}

\begin{proof}
    Since $U$ is quasi-projective, we can find an ample divisor $H$ on $U$ such that for any $\pi$-ample divisor $G$, $G+\pi^*H$ is ample on $X$. Let $\delta>0$ be a sufficiently small rational number such that $A+\delta B$ is $\pi$-ample. We write $K_{\mathcal{F}}+A+B=K_{\mathcal{F}}+A+\delta B+(1-\delta)B$, and note that $(\mathcal{F},(1-\delta)B)$ is F-lc. Notice that $(\mathcal{F},(1-\delta)B)$ is not just F-lc but also F-dlt. Choose a sufficiently ample divisor $H$ on $U$ such that $A+\delta B+\pi^*H$ is an ample divisor on $X$. By \cite[Lemma $3.24$]{CS21}, we can choose $A'\sim_{\mbR}A+\delta B+\pi^*H$ such that $(\mathcal{F},A'+(1-\delta)B)$ is F-dlt. This gives the required pair.
\end{proof}

\begin{remark}\label{relBer}
    The proof of the above lemma readily indicates that it is easy to generalize \cite[Lemma $3.24$]{CS21}, namely the Bertini theorem, for co-rank one F-dlt pairs $(\mathcal{F},\Delta)$ on threefolds with $\lfloor \Delta\rfloor=0$.
\end{remark}

Now we define some important polytopes for the Sarkisov program and the MMP relation for co-rank one foliations on normal quasi-projective threefolds.

\begin{definition}\cite[Definition $1.1.4$]{BCHM}
    Let $\pi:X\rightarrow U$ be a morphism between normal quasi-projective varieties, and let $V$ be a finite-dimensional affine subspace of $\text{WDiv}_{\mbR}(X)$. Fix an $\mbR$-divisor $A\geqslant 0$. Let $\mathcal{F}$ be a foliation on $X$. Then we define the following sets:
    \begin{enumerate}
        \item $V_A=\lbrace \Delta \mid \Delta=A+B, B\in V\rbrace$.
        \item $\mathcal{L}_A(V)=\lbrace \Delta=A+B\in V_A \mid (\mathcal{F},\Delta) \text{ is F-lc and } B\geqslant 0\rbrace$.
        \item $\mathcal{E}_{A,\pi}(V)=\lbrace \Delta\in \mathcal{L}_A(V) \mid K_{\mathcal{F}}+\Delta\text{ is pseudo-effective over $U$ }\rbrace$.
        \item $\mathcal{N}_{A,\pi}(V)=\lbrace \Delta\in \mathcal{L}_{A}(V) \mid K_{\mathcal{F}}+\Delta \text{ is nef over $U$ }\rbrace$.
        \item Given a birational map $\phi:X\dashrightarrow Y$ over $U$, define 
        $\mathcal{L}_{\phi,A,\pi}(V)=\lbrace \Delta\in \mathcal{E}_{A,\pi}(V) \mid \phi \text{ is a log terminal model for } K_{\mathcal{F}}+\Delta \text{ over } U\rbrace$.
        \item Finally, given a rational map $\psi:X\dashrightarrow Z$ over $U$, we define
        $\mathcal{A}_{\psi,A,\pi}(V)=\lbrace \Delta\in \mathcal{E}_{A,\pi}(V) \mid \psi \text{ is the ample model for } K_{\mathcal{F}}+\Delta \text{ over } U\rbrace$.
    \end{enumerate}
\end{definition}

\begin{definition}
    Two foliated Mori fiber spaces $\phi:(X,\mathcal{F}_X)\rightarrow S$ and $\psi:(Y,\mathcal{F}_Y)\rightarrow T$ are said to be foliated log MMP-related if they are the end products of running a $(K_{\mathcal{F}}+\Phi)$-MMP, where $\mathcal{F}$ is a co-rank one foliation on a $\mbQ$-factorial normal projective threefold $Z$ and $({\mathcal{F}},\Phi)$ is an F-dlt pair with $\lfloor \Phi\rfloor =0$.
\end{definition}
\begin{remark}
 As $Z$ is $\mbQ$-factorial and by \cite[Section $6$]{CS21}, $\mbQ$-factoriality is preserved under Foliated MMP. Together with \cite[Lemma $3.16$]{CS21} this implies $X$ and $Y$ in the above definition have at worst $\mbQ$-factorial klt singularities.
\end{remark}

In this article, we assume $X,Y,Z$ in the above definition are all threefolds. Next, we need the idea of a Shokurov polytope in the foliated setup. 

\begin{theorem}\label{SPT}
    Given a ray $R\subset \overline{NE}(X)$, let
    \[
    R^{\bot}=\lbrace \Delta\in \mathcal{L}_{A}(V) \mid (K_{\mathcal{F}}+\Delta)\cdot R=0\rbrace.
    \]
    Let $\pi:X\rightarrow U$ be a projective morphism of normal quasi-projective varieties with $X$ having $\mbQ$-factorial singularities, and let $\mathcal{F}$ be a co-rank one foliation on $X$. Let $V$ be a finite-dimensional affine subspace of $WDiv_{\mbR}(X)$, defined over $\mbQ$, such that the support of all elements of $V$ is generically transverse to the foliation. Fix an ample $\mbQ$-divisor $A$ over $U$. Suppose that for some $\mbQ$-divisor $\Delta_0$ on $X$, the pair $(\mathcal{F},\Delta_0)$ is F-dlt.

    Then $\mathcal{N}_{A,\pi}(V)$ is intersection of $\mathcal{L}_{A}(V)$ with $R^{\bot}$ for finitely many extremal rays of $\overline{NE}(X)$, in particular it is a rational polytope.

    Also, let $\phi:X\dashrightarrow Y$ be any birational contraction over $U$. Then $\mathcal{L}_{\phi,A,\pi}(V)$ is a rational polytope. Moreover, there are finitely many morphisms $f_i:Y\rightarrow Z_i$ over $U$, $1\leqslant i\leqslant k$, such that if $f:Y\rightarrow Z$ is any contraction over $U$ and there is a divisor $D$ on $Z$ ample over $U$ such that $K_{\mathcal{F}_Y}+\Gamma=\phi_*(K_{\mathcal{F}}+\Delta)\sim_{\mbR,U}f^*D$ for some $\Delta\in\mathcal{L}_{\phi,A,\pi}(V)$, then there exist an index $i$ and an isomorphism $\eta: Z_i\rightarrow Z$ such that $f=\eta\circ f_i$.
\end{theorem}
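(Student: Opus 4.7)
The plan is to adapt the classical Shokurov polytope argument to the foliated setting, using the foliated cone theorem for co-rank one F-dlt pairs on $3$-folds from \cite{CS21} together with the reduction from Theorem~\ref{CGT1}. First I would apply Theorem~\ref{CGT1} to replace $(\mathcal{F},\Delta_0)$, $V$, and $A$ by F-dlt data $(\mathcal{F},\Delta_0')$, $V'$, $A'$ such that $L(\mathcal{L}_A(V))$ lies in the interior of the lc polytope around the F-dlt pair $(\mathcal{F},\Delta_0')$. Since $L$ preserves $\mbQ$-linear equivalence over $U$, nefness, pseudo-effectivity, and the conditions defining log terminal and ample models all transfer unchanged, so it suffices to prove the statement after this reduction. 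A compactness argument then yields a uniform $\varepsilon>0$ such that $(\mathcal{F},\Delta-\varepsilon A)$ is log canonical for every $\Delta\in\mathcal{L}_A(V)$.

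For the finiteness of the hyperplanes $R^{\bot}$ meeting $\mathcal{L}_A(V)$: if $\Delta\in R^{\bot}\cap\mathcal{L}_A(V)$, then $(K_{\mathcal{F}}+\Delta-\varepsilon A)\cdot R=-\varepsilon(A\cdot R)<0$, so $R$ is a $(K_{\mathcal{F}}+\Delta-\varepsilon A)$-negative extremal ray. By the foliated cone theorem of \cite{CS21}, any such ray is generated by a rational curve $C$ with $-(K_{\mathcal{F}}+\Delta-\varepsilon A)\cdot C$ bounded by a constant depending only on the dimension, giving a uniform upper bound on $A\cdot C$. Combined with local finiteness of extremal rays in the $(K_\mathcal{F}+\Delta)$-negative part of $\overline{NE}(X/U)$ and compactness of $\mathcal{L}_A(V)$, only finitely many such $R$ can occur. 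Since $\mathcal{L}_A(V)$ is itself a rational polytope, $\mathcal{N}_{A,\pi}(V)$ is cut out from it by finitely many rational hyperplanes and is therefore a rational polytope.

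For the polytope $\mathcal{L}_{\phi,A,\pi}(V)$, I would characterize $\Delta\in\mathcal{L}_{\phi,A,\pi}(V)$ by two conditions: first, that $\phi$ is $(K_{\mathcal{F}}+\Delta)$-negative, which is a finite collection of rational linear discrepancy inequalities from the exceptional divisors on a common resolution $p:W\to X$, $q:W\to Y$; and second, that $\phi_*(K_{\mathcal{F}}+\Delta)$ is nef over $U$, which pulls back from the rational polytope $\mathcal{N}_{A',\pi_Y}(\phi_*V)$ on $Y$ (itself rational by the previous paragraph applied to $Y$). The intersection of these two polyhedral conditions inside $\mathcal{L}_A(V)$ yields the rational polytope structure on $\mathcal{L}_{\phi,A,\pi}(V)$. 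Finally, the family $\{f_i\}$ is obtained by observing that any contraction $f:Y\to Z$ as in the statement contracts precisely the face of $\overline{NE}(Y/U)$ on which $\phi_*(K_{\mathcal{F}}+\Delta)$ vanishes; since only finitely many extremal rays of $\overline{NE}(Y/U)$ meet $\phi_*\mathcal{L}_{\phi,A,\pi}(V)$ trivially by the first part applied on $Y$, there are only finitely many such faces, each determining a unique contraction up to isomorphism via the universal property of contractions of $K_{\mathcal{F}_Y}$-trivial faces. The main obstacle will be the uniform-$\varepsilon$ step together with the compactness argument controlling the set of relevant extremal rays, requiring careful interplay between the F-dlt reference pair from Theorem~\ref{CGT1} and the length bound from the foliated cone theorem in \cite{CS21}; the remaining steps are polyhedral bookkeeping that works essentially as in the classical klt case once the foliated cone theorem is invoked.
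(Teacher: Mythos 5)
Your proposal is correct and follows essentially the same route as the paper, which itself is an outline of the BCHM Shokurov-polytope argument (Theorem~3.11.1 and Corollary~3.11.2) transported to the foliated setting: reduce to an F-dlt reference pair via Theorem~\ref{CGT1}, use the fact that a $(K_{\mathcal{F}}+\Delta)$-trivial ray is $(K_{\mathcal{F}}+\Delta-\varepsilon A)$-negative together with compactness of $\mathcal{L}_A(V)$ and the foliated cone theorem to get finiteness of the hyperplanes $R^{\perp}$, cut out $\mathcal{L}_{\phi,A,\pi}(V)$ by the discrepancy inequalities and the nef polytope on $Y$, and deduce finiteness of ample models via faces and rigidity. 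The only cosmetic difference is that the paper passes to a local neighborhood first and then subtracts $A/2$, whereas you extract a uniform $\varepsilon$ before invoking compactness; the content is the same.
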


\begin{proof}
    The proof is analogous to that of \cite[Theorem $3.11.1$ and Corollary $3.11.2$]{BCHM}, as the foliation $\mathcal{F}$ does not affect the structure of $\overline{NE}(X/U)$, and we have cone theorem for $K_{\mathcal{F}}$ due to \cite[Theorem $3.31$]{CS21}. We present an outline. Since $\mathcal{L}_{A}(V)$ is compact, it is sufficient to prove this locally around any point $\Delta\in \mathcal{L}_{A}(V)$. Choose $\Delta'$ sufficiently close to $\Delta$ such that $\Delta'-\Delta+A/2$ is ample over $U$. Let $R$ be an extremal ray over $U$ such that $(K_{\mathcal{F}}+\Delta')\cdot R=0$, where $\Delta'\in \mathcal{L}_{A,\pi}(V)$. We have
    \[
    (K_{\mathcal{F}}+\Delta-A/2)\cdot R=(K_{\mathcal{F}}+\Delta')\cdot R-(\Delta'-\Delta+A/2)\cdot R<0.
    \]
    Since $\Delta=A+B$ for some ample divisor $A$ and effective divisor $B$, by Lemma \ref{CGT1} we can assume $(\mathcal{F},\Delta)$ to be F-dlt by moving in the linear series $|K_{\mathcal{F}}+\Delta|_{\mbR,U}$. By \cite[Theorem 3.31]{CS21}, there are only finitely many such rays. $\mathcal{N}_{A,\pi}(V)$ is a closed subset of $\mathcal{L}_{A}(V)$. If $K_{\mathcal{F}}+\Delta$ is not nef over $U$, then by the cone theorem, $K_{\mathcal{F}}+\Delta$ is negative on a rational curve tangent to $\mathcal{F}$ which generates an extremal ray $R$ of $\overline{NE}(X)$. Thus $\mathcal{N}_{A,\pi}(V)$ is the intersection of $\mathcal{L}_{A}(V)$ with the half-spaces determined by finitely many extremal rays of $\overline{NE}(X/U)$. This proves that $\mathcal{N}_{A,\pi}(V)$ is a rational polytope.

    Similarly to \cite[Corollary $3.11.2$]{BCHM}, we can prove that $\mathcal{L}_{\phi,A,\pi}(V)$ is a rational polytope. For the finiteness of ample models, let $f:Y\rightarrow Z$ be a contraction over $U$ such that
    \[
    K_{\mathcal{F}_Y}+\Gamma=K_{\mathcal{F}_Y}+\phi_*\Delta\sim_{\mbR,U}f^*D,
    \]
    where $\Delta\in \mathcal{L}_{\phi,A,\pi}(V)$ and $D$ is an ample $\mbR$-divisor over $U$. Then $\Gamma$ belongs to the interior of a unique face $G=R^{\bot}$ of $\mathcal{N}_{C,\psi}(W)$, where $W$ is the image of $V$ under $\phi$, $C=\phi_*A$, and the curves in $R$ are all the curves contracted by $f$. Now $\Delta$ belongs to the interior of a unique face $F$ of $\mathcal{L}_{\phi,A,\pi}(V)$, and $G$ is determined by $F$. Since $\mathcal{L}_{\phi,A,\pi}(V)$ is a rational polytope, it has finitely many faces. By the rigidity lemma, $f$ is determined by $R$, which completes the proof.
\end{proof}

\begin{corollary}\cite[Corollary $3.11.3$]{BCHM}\label{SPT2}
    With the same setup as Theorem $\ref{SPT}$, let $(\mathcal{F},\Delta_0)$ be a dlt pair with $\lfloor \Delta_0\rfloor=0$. Let $f:X\rightarrow Z$ be a morphism over $U$ such that $\Delta_0\in \mathcal{L}_A(V)$ and $K_{\mathcal{F}}+\Delta_0\sim_{\mbR,U}f^* H$, where $H$ is an ample divisor over $U$. Let $\phi:X\dashrightarrow Y$ be a birational map over $Z$.

    Then there is a neighbourhood $P_0$ of $\Delta_0$ in $\mathcal{L}_A(V)$ such that for all $\Delta\in P_0$, if $\phi$ is a log terminal model for $K_{\mathcal{F}}+\Delta$ over $Z$, then $\phi$ is a log terminal model for $K_{\mathcal{F}}+\Delta$ over $U$.
\end{corollary}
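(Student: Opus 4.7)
The plan is to reduce the upgrade from nefness over $Z$ to nefness over $U$ to a finite collection of strict positivity conditions at $\Delta_0$ which are automatic thanks to the polarization $H$. Let $g\colon Y\to Z$ be the induced morphism, so that $f=g\circ\phi$ as rational maps. Pushing the relation $K_{\mathcal{F}}+\Delta_0\sim_{\mbR,U}f^{*}H$ forward along the birational contraction $\phi$ yields
\[
K_{\mathcal{F}_Y}+\phi_*\Delta_0\sim_{\mbR,U}g^{*}H,\qquad\text{so}\qquad K_{\mathcal{F}_Y}+\phi_*\Delta\sim_{\mbR,U}g^{*}H+\phi_*(\Delta-\Delta_0).
\]
The negativity condition in the definition of a log terminal model is independent of the base, so it passes from $Z$ to $U$ without any work; hence the only thing to verify is that, for $\Delta$ close enough to $\Delta_0$, $K_{\mathcal{F}_Y}+\phi_*\Delta$ is nef over $U$ whenever it is nef over $Z$.

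Testing nefness ray by ray on extremal rays $R\subset\NE(Y/U)$: if $R$ is contracted by $g$, then $g^{*}H\cdot R=0$ and the hypothesis of nefness over $Z$ gives $(K_{\mathcal{F}_Y}+\phi_*\Delta)\cdot R\geqslant 0$. If instead $R$ is not contracted by $g$, then $g_{*}R$ is a nontrivial curve class on $Z$ contracted over $U$ (since $\pi_{Z}\circ g=\pi_{Y}$ and $R$ is contracted over $U$), and $H$ being ample over $U$ forces $g^{*}H\cdot R>0$. In particular $(K_{\mathcal{F}_Y}+\phi_*\Delta_0)\cdot R>0$, and by continuity of the intersection pairing in $\Delta$ the strict inequality $(K_{\mathcal{F}_Y}+\phi_*\Delta)\cdot R>0$ persists for every $\Delta$ sufficiently close to $\Delta_0$.

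To extract a single open neighborhood $P_0$ of $\Delta_0$ that works uniformly for all such rays simultaneously, I would appeal to the rational polyhedral structure supplied by Theorem \ref{SPT}. Applied to $(\mathcal{F},\Delta_0)$ on $X$ over $U$, it gives the finiteness of extremal rays of $\NE(X/U)$ that are $(K_{\mathcal{F}}+\Delta)$-negative for some $\Delta\in\mathcal{L}_A(V)$. Passing to a common resolution $p\colon W\to X$, $q\colon W\to Y$ of $\phi$ and writing $p^{*}(K_{\mathcal{F}}+\Delta)=q^{*}(K_{\mathcal{F}_Y}+\phi_*\Delta)+E$ with $E\geqslant 0$ and $q$-exceptional, one transports this finiteness into a finite list of extremal rays on $\NE(Y/U)$ that can obstruct nefness as $\Delta$ varies in $\mathcal{L}_A(V)$. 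The main obstacle is precisely this transfer: Theorem \ref{SPT} is formulated on the F-dlt side $(\mathcal{F},\Delta_0)$ on $X$, whereas the nef cone we need polyhedralised lives on $Y$, where the transformed pair need not be F-dlt. I would overcome it either by running the finiteness argument on a higher F-dlt model dominating both $X$ and $Y$, or by invoking Theorem \ref{CGT1} to produce an F-dlt perturbation of the transformed pair on $Y$ so that Theorem \ref{SPT} applies there directly; then choosing $P_0$ so that the finitely many strict inequalities of the second case hold simultaneously yields the corollary.
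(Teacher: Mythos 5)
Your approach matches the argument the paper is citing (BCHM, Corollary 3.11.3), and the core steps are sound: pushing $K_{\mathcal{F}}+\Delta_0\sim_{\mbR,U}f^*H$ forward to $K_{\mathcal{F}_Y}+\phi_*\Delta_0\sim_{\mbR,U}g^*H$ is legitimate since $f\circ p=g\circ q$ on a common resolution; base-independence of the $K_{\mathcal{F}}+\Delta$-negativity condition reduces everything to upgrading nefness; and the case split on whether an extremal ray of $\overline{NE}(Y/U)$ is $g$-contracted is exactly the right dichotomy, since $H$ ample over $U$ forces $g^*H\cdot R>0$ for rays not contracted by $g$. The paper's proof is only a citation, so your write-out is a genuine service.

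The one place where your sketch needs tightening is the finiteness step. Your first proposed mechanism --- proving the finiteness of $R^\perp$'s on $X$ via Theorem \ref{SPT} and then ``transporting'' it to $\overline{NE}(Y/U)$ through a common resolution --- does not go through as stated: extremal rays on $Y$ do not lift to extremal rays on $W$ in a controlled way, and the relation $p^*(K_{\mathcal{F}}+\Delta)=q^*(K_{\mathcal{F}_Y}+\phi_*\Delta)+E$ only compares intersection numbers against curves on $W$, so it does not convert a bound on $\overline{NE}(X/U)$ into a bound on $\overline{NE}(Y/U)$. The correct move is your second option: apply Theorem \ref{SPT} directly on $Y$ with $W=\phi_*V$ and $C=\phi_*A$. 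The hypothesis of \ref{SPT} only requires the existence of \emph{some} F-dlt pair for $\mathcal{F}_Y$, and this is available: after shrinking $P_0$ and invoking Theorem \ref{CGT1} one may assume the relevant $(\mathcal{F},\Delta)$ are F-dlt with $\lfloor\Delta\rfloor=0$, and if $\phi$ is a log terminal model of such a pair over $Z$ then, as in the proof of Lemma \ref{BPF} via \cite[Lemma 3.11]{CS21} and the negativity lemma, $(\mathcal{F}_Y,\phi_*\Delta)$ is F-dlt (one may assume $\phi$ is a log terminal model of some $\Delta\in P_0$, else $P_0$ works vacuously). With \ref{SPT} granted on $Y$, your continuity argument on the finitely many rays $R_i$ with $(K_{\mathcal{F}_Y}+\Gamma_0)\cdot R_i>0$ closes the proof. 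So the concern you raise is real, the first fix you float is a dead end, and the second fix is the right one but needs the F-dlt-on-$Y$ assertion made explicit rather than left as an option.
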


\begin{proof}
    The proof is the same as in \cite[Corollary $3.11.3$]{BCHM}.
\end{proof}

Now, to construct the morphisms involved in the Sarkisov diagrams, we will need a basepoint-free theorem in a slightly more general setup than \cite{CS21}, which essentially follows from the proof of \cite[Theorem $9.4$]{CS21}.

\begin{proposition}\label{BPFT}
    Let $\pi:X\rightarrow Y$ be a projective morphism of normal quasi-projective varieties, with $X$ being potentially klt. Let $\mathcal{F}$ be a co-rank one foliation on $X$, and let $(\mathcal{F},\Delta)$ be an F-dlt pair. Suppose $\Delta=A+B$ where $A\geqslant 0$ is ample over $Y$ and $B$ is effective. If $K_{\mathcal{F}}+\Delta$ is nef over $Y$, then it is semi-ample over $Y$.  
\end{proposition}

\begin{proof}
    In \cite[Theorem $9.4$]{CS21}, this theorem is proved for the case when $Y$ is a point; we extend it to the relative setup. The same proof majorly works for this set-up. The main ingredients are the existence of relevant contractions as in \cite[Theorem $8.4$]{CS21} in the case of projective morphism of quasi-projective varieties and relative foliated canonical bundle formula. To extend \cite[Theorem $8.4$]{CS21} in this set-up, we need existence and termination of MMP for projective morphism between normal quasi-projective varieties (cf. proof of \cite[Theorem $8.3$]{CS21}), which exists by \cite[Corollary $2.3$]{SS19}. Rest of the proof for \cite[Theorem $8.4$]{CS21} in the relative set-up works verbatim. For canonical bundle formula, we replace \cite[Lemma $9.1$]{CS21} by \cite[Theorem $2.3.2$]{CHLX23}.

    For the convenience of the reader, we present an outline. The assumption that $X$ is potentially klt guarantees the existence of a small $\mbQ$-factorial modification $g:\overline{X}\rightarrow X$ over $Y$ such that if we write $g^*(K_{\mathcal{F}}+\Delta)=K_{\overline{\mathcal{F}}}+\overline{\Delta}$, then for any $\epsilon>0$ we can find a divisor $\Theta$ with $(1-\epsilon)\overline{\Delta}\leqslant\Theta\leqslant \overline{\Delta}$ such that $(\overline{\mathcal{F}},\Theta)$ is F-dlt (cf. \cite[Theorem $2.23$ and Lemma $3.29$]{CS21}). Let us call this \textbf{Property $\Theta$}.

    Let $\Gamma=\frac{1}{2}A+B$, and let $\lambda$ be the nef threshold of $K_{\mathcal{F}}+\Gamma$ with respect to $A$ over $Y$. If $\lambda<\frac{1}{2}$, we are done, as $K_{\mathcal{F}}+\Delta$ would be ample over $Y$. Suppose $\lambda=\frac{1}{2}$. Then we get a contraction $f:X\rightarrow X'$, which contracts every curve in a $(K_{\mathcal{F}}+\Gamma)$-negative and $(K_{\mathcal{F}}+\Delta)$-trivial extremal ray $R$ of $\overline{NE}(X/Y)$, which exists by \cite[Theorem $8.4$]{CS21}. We have $\rho(X'/Y)<\rho(X/Y)$. Let $H_R$ be the supporting Cartier divisor of $R$. If $H_R$ is big over $Y$, $f$ is either a divisorial or a flipping contraction. In either case, we can replace $X$ with $X'$, and $(\mathcal{F},\Delta)$ by $(\mathcal{F}',A'+B')$ with $A'\geqslant0$ ample over $Y$ and $B'\geqslant0$, and we continue. Note that, we construct such $A'$ and $B'$ with $K_{\mathcal{F}'}+A'+B'\sim_{\Q,U}K_{\mathcal{F}'}+f'_*(\Delta)$ as in the proof of \cite[Lemma $9.3$]{CS21}. It is easy to check that $X'$ also has \textbf{Property $\Theta$} by using \cite[Lemma $8.5$]{CS21} .

    If $H_R$ is not big over $Y$, then by \cite[Corollary $2.28$]{spicer20}, $X$ is covered by rational curves tangent to $\mathcal{F}$ spanning $R$. We run a $K_{\overline{X}}$-MMP $\psi:\overline{X}\dashrightarrow W$ which is $(K_{\overline{\mathcal{F}}}+\overline{\Delta})$-trivial over $Y$. Let $\psi_*\overline{\mathcal{F}}=\mathcal{H}$, and $\Gamma=\psi_*\overline{\Delta}$. It suffices to show that $K_{\mathcal{H}}+\Gamma$ is semi-ample over $Y$. This MMP terminates with a $K_W$-Mori fiber space over $Y$. In fact, fibers of this Mori fiber space are tangent to the transformed foliation as this is also $K_{\mathcal{H}}$-Mori fiber space. Using the foliated canonical bundle formula \cite[Theorem $2.3.2$]{CHLX23} we descend the foliation to the base of this Mori fiber space and as in \cite[Lemma $9.3$]{CS21}, we can reduce the problem to a lower-dimensional base and proceed by induction to obtain our claim.
\end{proof}

\begin{lemma}\label{BPF}
    Let $\pi:X\rightarrow U$ be a projective morphism from a normal quasi-projective threefold with $\mbQ$-factorial singularities, and let $\mathcal{F}_X$ be a co-rank one foliation. Let $\Delta$ be a divisor on $X$ such that $\Delta=f_*(A+B)$, where $f:Z\dashrightarrow X$ is a birational map over $U$, $A\geqslant 0$ is ample over $U$, and $B$ is effective. Let $\mathcal{F}=f^{-1}_*\mathcal{F}_X$ be the pullback foliation on $Z$. Suppose that $f$ is $(K_{\mathcal{F}}+A+B)$-negative, $(\mathcal{F},A+B)$ is an F-lc pair where $\mathcal{F}$ has F-dlt singularities, and $K_{\mathcal{F}_X}+\Delta$ is nef over $U$. Then $K_{\mathcal{F}_X}+\Delta$ is semi-ample over $U$.
\end{lemma}

\begin{proof}
    By Lemma \ref{CGT1}, we can assume that $(\mathcal{F},A+B)$ is an F-dlt pair with $\lfloor A+B\rfloor=0$. Let $H$ be a general ample $\mbQ$-divisor on $X$. After possibly replacing $H$ with a sufficiently small multiple, we may assume that if $H_Z$ is the strict transform of $H$ on $Z$, then $A-H_Z$ is ample over $U$. By Remark \ref{relBer}, for a sufficiently small $\epsilon>0$, there exists an effective divisor $C\sim_{\mbR,U} A-H_Z$ such that $(\mathcal{F},A+B+\epsilon C)$ is F-dlt and $f$ is still $(K_{\mathcal{F}}+A+B+\epsilon C)$-negative. Thus, by the negativity lemma and \cite[Lemma $3.11$]{CS21}, it follows that $(\mathcal{F}_X,\Delta+f_*(\epsilon C))$ is also F-dlt. Now $f_*A\sim_{\mbR,U}f_*C+H$. Thus, for a sufficiently small rational number $\delta>0$, we may choose $A'=\delta H$, $C'=(1-\delta)f_*A+\delta f_*C$ and denote $\Delta':=A'+C'+f_*B$. By construction, $f_*\Delta \sim_{\mbR,U} \Delta'$ and the resulting pair $(\mathcal{F}_X, \Delta')$ is F-dlt, with $A'$ being ample over $U$. Hence, by Proposition \ref{BPFT}, $K_{\mathcal{F}_X}+\Delta$ is semi-ample over $U$.
\end{proof}

\section{Log Geography of Log Terminal Models}
By \cite[Theorem $2.1$]{SS19}, we have the existence of a Mori fiber space for an F-dlt pair $(\mathcal{F},\Delta)$ where $\mathcal{F}$ is a co-rank one foliation on a $\mbQ$-factorial normal projective threefold $X$. By \cite[Theorem $1.2$]{CS21} and \cite[Theorem $2.6$]{SS19}, we know of the existence of log terminal models. We now proceed to prove the finiteness of log terminal models.

\begin{theorem}\label{Finiteness}
    Let $\pi:X\rightarrow U$ be a morphism of normal quasi-projective varieties, where $\dim X=3$, $X$ has $\mbQ$-factorial singularities, and $\mathcal{F}$ is a co-rank one foliation on $X$. Let $V$ be a finite-dimensional affine subspace of $WDiv_{\mbR}(X)$ defined over $\mbQ$. Fix a general ample $\mbQ$-divisor $A$ over $U$. Let $\mathcal{C}\subset \mathcal{L}_A(V)$ be a rational polytope. Suppose there exists a divisor $D_0\in V$ with $\lfloor D_0\rfloor=0$ such that $(\mathcal{F},D_0)$ is F-dlt.

    Then there are finitely many rational maps $\phi_i:X\dashrightarrow Y_i$ over $U$, $1\leqslant i\leqslant k$, with the property that if $\Delta\in \mathcal{C}\cap \mathcal{E}_{A,\pi}(V)$, then there exists an index $i$ such that $\phi_i$ is a log terminal model of $K_{\mathcal{F}}+\Delta$ over $U$.    
\end{theorem}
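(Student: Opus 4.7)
The plan is to adapt the strategy of \cite{BCHM} for finiteness of log terminal models (Corollary 1.1.5 there) to the foliated setting, using Theorem \ref{SPT} as the main combinatorial input, together with existence and termination of foliated MMP from \cite{CS21} and \cite{SS19}, Theorem \ref{CGT1}, and the semi-ampleness result of Lemma \ref{BPF}. Since $\mathcal{C} \cap \mathcal{E}_{A,\pi}(V)$ is compact, I would first reduce to a local statement: it suffices to show that for every $\Delta_0 \in \mathcal{C} \cap \mathcal{E}_{A,\pi}(V)$ there is a rational polytopal neighbourhood $P_0 \subset \mathcal{C}$ of $\Delta_0$ and finitely many rational maps $\phi_1,\ldots,\phi_m$ such that each $\Delta \in P_0 \cap \mathcal{E}_{A,\pi}(V)$ admits some $\phi_i$ as a log terminal model over $U$. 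By Theorem \ref{CGT1}, after a rational affine isomorphism preserving $\mbQ$-linear equivalence over $U$, we may further assume that the anchor pair $(\mathcal{F}, \Delta_0)$ is itself F-dlt.

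Next I would fix an anchor log terminal model. Existence of foliated log terminal models for F-dlt corank-one pairs on $\mbQ$-factorial projective $3$-folds produces a birational contraction $\phi_0: X \dashrightarrow Y_0$ over $U$ which is a log terminal model of $(\mathcal{F}, \Delta_0)$; on $Y_0$ the transformed pair $(\mathcal{F}_{Y_0}, \phi_{0*}\Delta_0)$ is F-dlt and its canonical divisor is nef over $U$, hence semi-ample by Lemma \ref{BPF}, yielding an ample model $f_0: Y_0 \to Z_0$. Because $\phi_0$ is $K_\mathcal{F}+\Delta_0$-negative, a negativity/continuity argument shows that $\phi_0$ remains $K_\mathcal{F}+\Delta$-negative for every $\Delta$ in some small rational polytope $P_0$ around $\Delta_0$ in $\mathcal{C}$. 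Consequently, composing $\phi_0$ with any log terminal model of $(\mathcal{F}_{Y_0}, \phi_{0*}\Delta)$ over $U$ yields a log terminal model of $(\mathcal{F}, \Delta)$ over $U$, so the problem reduces to local finiteness on $Y_0$.

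The third step applies Theorem \ref{SPT} on $Y_0$ with $V_0 := \phi_{0*}V$ and $A_0 := \phi_{0*}A$: the nef polytope $\mathcal{N}_{A_0, \pi}(V_0)$ is rational, and there are only finitely many morphisms $g_j: Y_0 \to W_j$ over $U$ which can arise as ample models of $K_{\mathcal{F}_{Y_0}}+\Gamma$ for $\Gamma$ in this polytope. For each $\Delta \in P_0 \cap \mathcal{E}_{A,\pi}(V)$ I would run a $(K_{\mathcal{F}_{Y_0}}+\phi_{0*}\Delta)$-MMP over $U$; termination from \cite{CS21} produces a log terminal model $Y_0 \dashrightarrow Y_\Delta$, on which the transformed divisor is nef, hence semi-ample by Lemma \ref{BPF}, defining an ample model $h_\Delta: Y_\Delta \to Z_\Delta$. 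By Theorem \ref{SPT}, the pair $(Y_\Delta, Z_\Delta)$ takes only finitely many values up to isomorphism as $\Delta$ varies in $P_0$, and Corollary \ref{SPT2} identifies, for $\Delta$ sufficiently close to $\Delta_0$, log terminal models over $Z_\Delta$ with log terminal models over $U$. Pulling these finitely many options back through $\phi_0$ yields the required finite list $\phi_1, \ldots, \phi_m$ on $X$.

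The hard part will be the identification, for each $\Delta$ near $\Delta_0$, of a log terminal model over the global base $U$ with one of the finitely many models produced over the ample model $Z_\Delta$: we must choose $P_0$ small enough for Corollary \ref{SPT2} to apply uniformly, and verify that the polytope structure on $Y_0$ remains rational after the pushforward $\phi_{0*}$ (so that Theorem \ref{SPT} is available there). Termination of the foliated MMP handles the MMP step, the semi-ampleness from Lemma \ref{BPF} promotes nef divisors to base-point free ones, and the finiteness in Theorem \ref{SPT} then bounds the number of ample models and, via Corollary \ref{SPT2}, the number of log terminal models needed to cover $P_0$; compactness of $\mathcal{C}$ then completes the argument.
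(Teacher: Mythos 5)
The approach you outline—reduce locally, pass to a log terminal model $Y_0$, get an ample model $Z_0$ via Lemma~\ref{BPF}, and invoke Corollary~\ref{SPT2} to transfer log terminal models over $Z_0$ back to log terminal models over $U$—is indeed the skeleton of the paper's argument, and several of the auxiliary moves (reduce to F-dlt by Theorem~\ref{CGT1}, shrink the neighbourhood so that $\phi_0$ stays $K_\mathcal{F}+\Delta$-negative) match the paper closely.

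However, the central step has a genuine gap. You assert that ``by Theorem~\ref{SPT}, the pair $(Y_\Delta, Z_\Delta)$ takes only finitely many values up to isomorphism as $\Delta$ varies in $P_0$.'' Theorem~\ref{SPT} does not say this. What it says is: for a \emph{fixed} birational contraction $\phi$, the locus $\mathcal{L}_{\phi,A,\pi}(V)$ of divisors for which $\phi$ is a log terminal model is a rational polytope, and for that fixed $\phi$ there are only finitely many ample models downstream. It does not bound the number of distinct log terminal models $Y_\Delta$ that can arise as $\Delta$ varies; that finiteness is precisely the content of the theorem you are trying to prove, so invoking it here is circular. After you run a $(K_{\mathcal{F}_{Y_0}}+\phi_{0*}\Delta)$-MMP for each nearby $\Delta$ you have no a priori control on the set of output varieties $Y_\Delta$.

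The paper closes this gap by an induction on $\dim\mathcal{C}$ that your proposal does not contain. The key observation is: after replacing $X$ by a log terminal model and then by its ample model base $Z$, one has $K_{\mathcal{F}}+\Delta_0 \sim_{\mbR,Z} 0$. In this situation any $\Theta\in\mathcal{C}$ is a convex combination $\Theta=\lambda\Delta+(1-\lambda)\Delta_0$ with $\Delta$ on the boundary of $\mathcal{C}$, and $K_\mathcal{F}+\Theta\sim_{\mbR,Z}\lambda(K_\mathcal{F}+\Delta)$, so $\Theta$ and $\Delta$ share the same log terminal models over $Z$. Since the boundary of $\mathcal{C}$ lies in finitely many rational hyperplanes of strictly smaller dimension, the inductive hypothesis gives finiteness over $Z$, and then Corollary~\ref{SPT2} promotes this to finiteness over $U$. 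Without this dimension-lowering step your argument has no source of finiteness for the log terminal models themselves, only for the ample models of a fixed one; you should insert the induction (or an equivalent mechanism) before the appeal to Corollary~\ref{SPT2}.
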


\begin{proof}
    Suppose $\Delta\in \mathcal{C}$. By Lemma \ref{CGT1}, we can choose $K_{\mathcal{F}}+\Delta'\sim_{\mbR,U}K_{\mathcal{F}}+\Delta$ such that $(\mathcal{F},\Delta')$ is an F-dlt pair. A map $\psi:X\dashrightarrow Z$ is a log terminal model of $K_{\mathcal{F}}+\Delta$ over $U$ if and only if it is a log terminal model of $K_{\mathcal{F}}+\Delta'$ over $U$, by application of the negativity lemma. Thus, we can assume that for any $\Delta\in \mathcal{C}$, the pair $(\mathcal{F},\Delta)$ is F-dlt. By replacing $V_A$ with the span of $\mathcal{C}$, we can also assume that $\mathcal{C}$ spans $V$.

    First, assume there is a divisor $\overline{\Delta}\in \mathcal{C}$ such that $K_{\mathcal{F}}+\overline{\Delta}\sim_{\mbR,U}0$. Pick any $\Theta\in\mathcal{C}$, $\Theta\neq \overline{\Delta}$. Then $\Theta=\lambda\Delta+(1-\lambda)\overline{\Delta}$ for some $0<\lambda\leqslant 1$ and some $\Delta$ on the boundary of $\mathcal{C}$. Now $K_{\mathcal{F}}+\Theta \sim_{\mbR,U}\lambda (K_{\mathcal{F}}+\Delta)$. In particular, $\Delta\in\mathcal{E}_{A,\pi}(V)$ if and only if $\Theta\in\mathcal{E}_{A,\pi}(V)$, and $(\mathcal{F},\Delta)$ and $(\mathcal{F},\Theta)$ have the same log terminal models over $U$. Since the boundary of $\mathcal{C}$ is contained in finitely many affine hyperplanes defined over $\mbQ$, we are done by induction on the dimension of $\mathcal{C}$.

    We now prove the general case. Since $\mathcal{L}_{A}(V)$ is compact and $\mathcal{C}\cap \mathcal{E}_{A,\pi}(V)$ is closed, it is sufficient to prove the result locally around any divisor $\Delta$ in the set. Since $\mathcal{F}$ has F-dlt singularities, we can find $\Theta_0\in \mathcal{L}_{A}(V)$ such that $(\mathcal{F},\Theta_0)$ is an F-dlt pair with $\lfloor\Theta_0\rfloor=0$. Let $\phi:X\dashrightarrow Y$ be a log terminal model of $(\mathcal{F},\Theta_0)$ over $U$.

Pick a neighbourhood $\mathcal{C}_0\subset \mathcal{L}_{A}(V)$ of $\Theta_0$, which is a rational polytope, such that for any $\Delta\in\mathcal{C}_0$, the map $\phi$ is $(K_{\mathcal{F}}+\Delta)$-nonpositive and $(\mathcal{F},\Delta)$ is an F-dlt pair with $\lfloor \Delta\rfloor =0$. Let $\phi_*\Theta_0=\Gamma_0$ and $\phi_*\Delta=\Gamma$. Since $(\mathcal{F}_Y,\Gamma_0)$ is an F-dlt pair and $Y$ is $\mbQ$-factorial, after possibly shrinking $\mathcal{C}_0$, we may assume that $(\mathcal{F}_Y,\Gamma)$ is an F-dlt pair for all $\Delta\in \mathcal{C}_0$. In particular, by replacing $\mathcal{C}$ with $\mathcal{C}_0$, we may assume that the rational polytope $\mathcal{C}'=\phi_*(\mathcal{C})$ is contained in $\mathcal{L}_{\phi_* A}(W)$, where $W=\phi_*(V)$ and the base change is $\pi':Y\rightarrow U$. 

By the proof of Lemma \ref{BPF}, there exist an affine linear isomorphism $L:W\rightarrow V'$ and a general ample $\mbQ$-divisor $A'$ over $U$. These satisfy the conditions that $L(\mathcal{C}')\subset \mathcal{L}_{A'}(V')$, that $L(\Gamma)\sim_{\R,U}\Gamma$ for all $\Gamma\in \mathcal{C}'$, and that $(\mathcal{F}_Y,\Gamma)$ is an F-dlt pair for any $\Gamma\in L(\mathcal{C}')$.

Note that $\dim V'\leqslant \dim V$. As $L(\Gamma)\sim_{\R,U}\Gamma$ and $\phi$ is $(K_{\mathcal{F}}+\Delta)$-negative, it follows that any log terminal model of $(\mathcal{F}_Y,L(\Gamma))$ over $U$ is also a log terminal model of $(\mathcal{F},\Delta)$ for any $\Delta\in \mathcal{C}$. By replacing $\mathcal{F}$ with $\mathcal{F}_Y$ and $\mathcal{C}$ with $L(\mathcal{C}')$, we can therefore assume that $K_{\mathcal{F}}+\Theta_0$ is nef over $U$.

By Lemma \ref{BPF}, $K_{\mathcal{F}}+\Theta_0$ is semi-ample over $U$; hence, it has an ample model $\psi:X\rightarrow Z$ over $U$. In particular, $K_{\mathcal{F}}+\Theta_0\sim_{\mbR,Z} 0$. From what we have already proved, there are finitely many birational maps $\phi_i:X\dashrightarrow Y_i$ over $Z$, for $1\leqslant i\leqslant k$, such that for any $\Delta\in \mathcal{C}\cap \mathcal{E}_{A,\pi}(V)$, there is an index $i$ for which $\phi_i$ is a log terminal model for $K_{\mathcal{F}}+\Delta$ over $Z$. Since there are only finitely many such indices, after possibly shrinking $\mathcal{C}$, Corollary \ref{SPT2} implies that for any $\Delta\in \mathcal{C}$, $\phi_i$ is a log terminal model for $K_{\mathcal{F}}+\Delta$ over $Z$ if and only if it is a log terminal model for $K_{\mathcal{F}}+\Delta$ over $U$. 

Suppose that $\Delta\in \mathcal{C}\cap \mathcal{E}_{A,\pi}(V)$. Then $\Delta\in \mathcal{C}\cap \mathcal{E}_{A,\psi}(V)$, and there is an index $i$ such that $\phi_i$ is a log terminal model for $K_{\mathcal{F}}+\Delta$ over $Z$. This implies that $\phi_i$ is a log terminal model for $K_{\mathcal{F}}+\Delta$ over $U$.
\end{proof}

Now we come to the main theorem of this section, which is the counterpart of \cite[Corollary $1.1.5$]{BCHM} in the foliated setting.

\begin{theorem}\label{LGLCM}
    Let $\pi:X\rightarrow U$ be a morphism of normal projective varieties, where $\mathcal{F}$ is a co-rank one foliation on $X$, $\dim X=3$, and $X$ is $\mbQ$-factorial. Let $V$ be a finite-dimensional affine subspace of $WDiv_{\mbR}(X)$ defined over $\mbQ$. Suppose there is a divisor $\Delta_0\in V$ such that $(\mathcal{F},\Delta_0)$ is an F-dlt pair with $\lfloor\Delta_0\rfloor=0$. Let $A$ be a general ample $\mbQ$-divisor over $U$ which has no components in common with any elements of $V$.
    \begin{enumerate}
        \item There are finitely many birational contractions $\phi_i:X\dashrightarrow Y_i$ over $U$, $1\leqslant i\leqslant p$, such that
        \[
        \mathcal{E}_{A,\pi}(V)=\cup_{i=1}^{p} \mathcal{L}_i,
        \]
        where each $\mathcal{L}_i=\mathcal{L}_{\phi_i,A,\pi}(V)$ is a rational polytope. Moreover, if $\phi:X\dashrightarrow Y$ is a log terminal model of $(\mathcal{F},\Delta)$ over $U$ for some $\Delta\in \mathcal{E}_{A,\pi}(V)$, then $\phi=\phi_i$ for some $1\leqslant i\leqslant p$.
        \item There are finitely many rational maps $\psi_j:X\dashrightarrow Z_j$ over $U$, $1\leqslant j\leqslant q$, which partition $\mathcal{E}_{A,\pi}(V)$ into subsets $\mathcal{A}_j=\mathcal{A}_{\psi_j,A,\pi}(V)$.
        \item For every $1\leqslant i\leqslant p$, there exists a $j$ with $1\leqslant j\leqslant q$ and a morphism $f_{i,j}:Y_i\rightarrow Z_j$ such that $\mathcal{L}_i\subset \overline{\mathcal{A}}_j$.
    \end{enumerate}
    In particular, $\mathcal{E}_{A,\pi}(V)$ and each $\overline{\mathcal{A}}_j$ are unions of finitely many rational polytopes.
\end{theorem}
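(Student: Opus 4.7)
First I would apply Theorem \ref{Finiteness} to the rational polytope $\mathcal{C} = \mathcal{L}_A(V)$, after using Theorem \ref{CGT1} to move the F-dlt witness across all of $V$ and, if necessary, passing to a small $\mathbb{Q}$-factorialization of $X$ to match the hypotheses. This produces finitely many birational contractions $\phi_i \colon X \dashrightarrow Y_i$ over $U$ such that every $\Delta \in \mathcal{E}_{A,\pi}(V)$ admits some $\phi_i$ as a log terminal model. Setting $\mathcal{L}_i = \mathcal{L}_{\phi_i,A,\pi}(V)$ gives the cover in (1); rationality of each $\mathcal{L}_i$ as a polytope follows from Theorem \ref{SPT}, and the uniqueness clause — that any log terminal model is one of the $\phi_i$ — comes from the usual common-resolution comparison. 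The whole plan is a direct transposition of \cite[Corollary 1.1.5]{BCHM} using these foliated analogues together with Lemma \ref{BPF}.

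Next I would produce the ample models. For each $i$ and each $\Delta \in \mathcal{L}_i$ the divisor $K_{\mathcal{F}_{Y_i}} + \phi_{i*}\Delta$ is nef and F-dlt on $Y_i$, so Lemma \ref{BPF} makes it semi-ample and yields an ample model $\psi \colon X \dashrightarrow Z$ of $K_{\mathcal{F}}+\Delta$ over $U$. The second half of Theorem \ref{SPT}, applied inside $\mathcal{L}_i$, bounds the isomorphism classes of such $Z$; unioning over $i$ yields the finite list $\psi_j \colon X \dashrightarrow Z_j$. Uniqueness of ample models, up to isomorphism and invariance under $\sim_{\mathbb{R},U}$ (via the negativity-lemma argument already used in the proof of \ref{Finiteness}), then partitions $\mathcal{E}_{A,\pi}(V)$ into $\mathcal{A}_j = \mathcal{A}_{\psi_j,A,\pi}(V)$, settling (2).

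For (3), I would pick $\Delta$ in the relative interior of $\mathcal{L}_i$, take its ample model $\psi_j$, and extract the morphism $f_{i,j} \colon Y_i \to Z_j$ from the semi-ample fibration on $Y_i$. A standard continuity argument — the supporting face of $\phi_{i*}(K_{\mathcal{F}}+\Delta)$ inside $\overline{\mathrm{NE}}(Y_i/U)$ can only grow on the boundary of $\mathcal{L}_i$, so the ample model there factors through $Z_j$ — gives $\mathcal{L}_i \subseteq \overline{\mathcal{A}}_j$. Rationality of each $\overline{\mathcal{A}}_j$ and of $\mathcal{E}_{A,\pi}(V)$ is then immediate, since each is a finite union of the rational polytopes $\mathcal{L}_i$.

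The main obstacle is the invocation of Lemma \ref{BPF} on each $Y_i$: one must check uniformly for $\Delta \in \mathcal{L}_i$ that $Y_i$ inherits $\mathbb{Q}$-factoriality (or the potentially klt condition) from the MMP producing $\phi_i$, that $(\mathcal{F}_{Y_i},\phi_{i*}\Delta)$ is F-dlt, and that $\phi_i$ may legitimately play the role of $f$ in the hypothesis that $f$ is $(K_{f^{-1}\mathcal{F}_X}+A+B)$-negative. Once semi-ampleness is secured across each chamber $\mathcal{L}_i$, the remaining convex-geometric bookkeeping is routine and identical in spirit to \cite[Corollary 1.1.5]{BCHM}, with no further foliation-specific input.
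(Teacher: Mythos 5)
Your proposal is correct and follows essentially the same route as the paper: reduce to the existence and finiteness statements already established — Theorem~\ref{Finiteness} for the finite cover by chambers of log terminal models, Theorem~\ref{SPT} for rationality of those chambers and for bounding the associated contraction morphisms, and Lemma~\ref{BPF} for semi-ampleness of the nef pushforward so that each $K_{\mathcal{F}}+\Delta$ admits an ample model. The paper compresses all of this into a couple of lines and simply cites \cite{CS21} and \cite{SS19} for existence; you unwind the same logic more explicitly, and in particular you flag two details the paper leaves implicit — the passage to a small $\mathbb{Q}$-factorialization to reconcile the ``potentially klt'' hypothesis here with the $\mathbb{Q}$-factoriality required by Theorem~\ref{Finiteness}, and the verification that each $Y_i$ inherits $\mathbb{Q}$-factoriality and that $(\mathcal{F}_{Y_i},\phi_{i*}\Delta)$ is F-dlt so that Lemma~\ref{BPF} applies uniformly on each chamber. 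Both points are standard consequences of the foliated MMP and do not change the architecture of the argument, so there is no genuine divergence from the paper's proof, only a more careful accounting of hypotheses.
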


\begin{proof}
    To prove $(1)$ and $(2)$, by Theorem \ref{Finiteness} and Theorem \ref{SPT}, and since ample models are unique by the rigidity lemma, it suffices to prove that if $\Delta\in \mathcal{E}_{A,\pi}(V)$, then $K_{\mathcal{F}}+\Delta$ has both a log terminal model and an ample model over $U$. By Lemma \ref{CGT1}, we may assume that $(\mathcal{F},\Delta)$ is an F-dlt pair, as log terminal and ample models are invariant under $\mbR$-linear equivalence. The existence of a log terminal model and an ample model over $U$ is guaranteed by \cite{CS21} and \cite{SS19}.
    
    Part $(3)$ follows from the proof of Theorem \ref{SPT}.
\end{proof}
\section{The Sarkisov Program}
We now prove that any birational map between two foliated Mori fiber spaces that are log MMP-related can be factored into a composition of Sarkisov links; this is the Sarkisov program. In this section, we follow the approach of \cite{HM09}.

First, let us establish some notation. Given a rational contraction $f:Z\dashrightarrow X$, an ample $\mathbb{Q}$-divisor $A$ on $Z$, and a finite-dimensional affine subspace $V$ of $WDiv_{\mbR}(Z)$ which is defined over $\mbQ$, we define
\[
\mathcal{A}_{A,f}(V)=\lbrace \Theta\in \mathcal{E}_{A}(V) \mid f \text{ is the ample model of } (\mathcal{F},\Theta)\rbrace,
\]
where $\mathcal{F}$ is a co-rank-one foliation on $Z$, $\dim Z=3$, and $Z$ has $\mbQ$-factorial singularities. Let $\mathcal{C}_{A,f}(V)$ denote the closure of $\mathcal{A}_{A,f}(V)$. We also assume that there is a $\mbQ$-divisor $D_0\in V$ with $\lfloor D_0\rfloor=0$ such that $(\mathcal{F},A+D_0)$ is an F-dlt pair and $K_{\mathcal{F}}+A+D_0$ is a big divisor.

\begin{theorem}[cf. {\cite[Theorem $3.3$]{HM09}}]\label{Lsm1}
    There are finitely many rational contractions $f_i:Z\dashrightarrow X_i$ for $1\leqslant i\leqslant m$ with the following properties:
    \begin{enumerate}
        \item The collection $\lbrace \mathcal{A}_i=\mathcal{A}_{A,f_i} \mid 1\leqslant i\leqslant m\rbrace$ is a partition of $\mathcal{E}_A(V)$. Each $\mathcal{A}_i$ is a finite union of the interiors of rational polytopes. If $f_i$ is birational, then $\mathcal{C}_i=\mathcal{C}_{A,f_i}$ is a rational polytope.
        \item If $i, j$ are indices such that $\mathcal{A}_j\cap \mathcal{C}_i\neq \varnothing$, then there is a contraction $f_{i,j}:X_i\rightarrow X_j$ and a factorization $f_j=f_{i,j}\circ f_i$.
    \end{enumerate}
    Now suppose, in addition, that $V$ spans the N\'eron-Severi group of $Z$.
    \begin{enumerate}
        \setcounter{enumi}{2} 
        \item For any $i \in \{1, \dots, m\}$, let $\mathcal{C}$ be a connected component of $\mathcal{C}_i$ that intersects the interior of $\mathcal{L}_A(V)$. The following are equivalent:
        \begin{enumerate}
            \item $\mathcal{C}$ spans $V$.
            \item If $\Theta\in \mathcal{A}_i\cap \mathcal{C}$, then $f_i$ is a log terminal model of $K_{\mathcal{F}}+\Theta$.
            \item $f_i$ is birational and $X_i$ is $\mbQ$-factorial.
        \end{enumerate}
        \item If $i, j \in \{1, \dots, m\}$ are indices such that $\mathcal{C}_i$ spans $V$, and if $\Theta$ is a general point of $\mathcal{A}_j\cap \mathcal{C}_i$ which is also in the interior of $\mathcal{L}_A(V)$, then $\mathcal{C}_i$ and $\overline{NE}(X_i/X_j)^*\times \mbR^k$ are locally isomorphic in a neighbourhood of $\Theta$ for some $k\geqslant 0$. Furthermore, the relative Picard number of $f_{i,j}:X_i\rightarrow X_j$ is equal to the difference in the dimensions of $\mathcal{C}_i$ and $\mathcal{C}_j\cap \mathcal{C}_i$.
    \end{enumerate}
\end{theorem}
\begin{proof}
    Part (1) follows from Theorem \ref{LGLCM}. We begin by proving (2).\\

    Pick $\Theta\in \mathcal{A}_j\cap\mathcal{C}_i$ and $\Theta'\in \mathcal{A}_i$, and for $t \in (0,1]$, let $\Theta_t = t\Theta'+(1-t)\Theta$. By the finiteness of log terminal models, we may find a positive constant $\delta>0$ and a birational contraction $f:Z\dashrightarrow X$ which is a log terminal model of $K_{\mathcal{F}}+\Theta_t$ for all $t\in (0,\delta]$. By replacing $\Theta'$ with $\Theta_{\delta}$, we may assume $\delta=1$. By construction, $\Theta_t$ lies in $\mathcal{E}_A(V)$; in particular, $\Theta_t=A+D_t$ for some effective $D_t\in V$. If we set $\Delta_t=f_*\Theta_t$, then $K_{\mathcal{F}_X}+\Delta_t$ is nef, $(\mathcal{F}_X,\Delta_t)$ is an F-dlt pair, and $f$ is $(K_{\mathcal{F}}+\Theta_t)$-non-positive for $t\in [0,1]$. Lemma \ref{BPF} implies that $K_{\mathcal{F}_X}+\Delta_t$ is semi-ample, and so for each $t$ there is an induced contraction $g_t:X\rightarrow X_t$. In particular, we get contractions $g_i:X\rightarrow X_i$ together with ample divisors $H_{1/2}$ and $H_1$ such that $K_{\mathcal{F}_X}+\Delta_{1/2}=g_i^*H_{1/2}$ and $K_{\mathcal{F}}+\Delta_1=g_i^*H_1$. If we set $H_t=(2t-1)H_1+2(1-t)H_{1/2}$, then $K_{\mathcal{F}_X}+\Delta_t=g_i^*H_t$ for all $t\in [0,1]$. As $K_{\mathcal{F}_X}+\Delta_0$ is semi-ample, it follows that $H_0$ is semi-ample, and the associated contraction $f_{i,j}:X_i\rightarrow X_j$ is the required morphism.\\

    Now we prove part (3). Suppose $V$ spans the N\'eron-Severi group of $Z$ and that $\mathcal{C}$ spans $V$. Pick $\Theta$ in the interior of $\mathcal{C}\cap \mathcal{A}_i$. Let $f:Z\dashrightarrow X$ be a log terminal model of $K_{\mathcal{F}}+\Theta$. By Theorem \ref{Finiteness}, $f=f_j$ for some $j$, and $\Theta\in \mathcal{C}_j$. But then $\mathcal{A}_i\cap \mathcal{A}_j\neq \varnothing$, which implies $i=j$. If $f_i$ is a log terminal model of $K_{\mathcal{F}}+\Theta$, then by definition, $f_i$ is birational and $X_i$ is $\mbQ$-factorial.

    Finally, suppose that $f_i$ is birational and $X_i$ is $\mbQ$-factorial. Fix $\Theta\in \mathcal{A}_i$. Pick any divisor $B\in V$ such that $-B$ is ample, $K_{\mathcal{F}_{X_i}}+f_{i*}(\Theta+B)$ is ample, and $\Theta+B\in\mathcal{L}_A(V)$. Then $f_i$ is $(K_{\mathcal{F}}+\Theta+B)$-negative, so $\Theta+B\in \mathcal{A}_i$. Since we can find a basis for the N\'eron-Severi group of $Z$ consisting of ample divisors, this implies that $\mathcal{C}$ spans $V$. This establishes (3).\\

    The proof of (4) is exactly the same as the proof of \cite[Theorem $3.3(4)$]{HM09}, replacing the use of \cite[Corollary $3.11.3$]{BCHM} with our Corollary \ref{SPT2}.\\
\end{proof}

From now on, we assume $V$ is a two-dimensional space that satisfies all the conditions of the previous theorem.

\begin{lemma}[cf. {\cite[Lemma $3.5$]{HM09}}]\label{LSM2}
    Let $f:Z\dashrightarrow X$ and $g:Z\dashrightarrow Y$ be two rational contractions such that $\mathcal{C}_{A,f}$ is two-dimensional and $\mathcal{O}=\mathcal{C}_{A,f}\cap\mathcal{C}_{A,g}$ is one-dimensional. Assume $\rho(X)\geqslant \rho(Y)$ and that $\mathcal{O}$ is not contained in the boundary of $\mathcal{L}_A(V)$. Let $\Theta$ be an interior point of $\mathcal{O}$ and let $\Delta=f_*\Theta$.

    Then there is a rational map $\pi:X\dashrightarrow Y$ that factors $g$ as $g=\pi\circ f$. Furthermore, one of the following holds:
    \begin{enumerate}
        \item $\rho(X)=\rho(Y)+1$ and $\pi$ is a $(K_{\mathcal{F}_X}+\Delta)$-trivial morphism. In this case:
        \begin{enumerate}
            \item If $\pi$ is birational, then $\mathcal{O}$ is not contained in the boundary of $\mathcal{E}_A(V)$. This subdivides into two cases:
            \begin{enumerate}
                \item $\pi$ is a divisorial contraction and $\mathcal{O}\neq \mathcal{C}_{A,g}$.
                \item $\pi$ is a small contraction and $\mathcal{O}=\mathcal{C}_{A,g}$.
            \end{enumerate}
            \item If $\pi$ is a Mori fiber space, then $\mathcal{O}=\mathcal{C}_{A,g}$ is contained in the boundary of $\mathcal{E}_A(V)$.
        \end{enumerate}
        \item $\rho(X)=\rho(Y)$. In this case, $\pi$ is a $(K_{\mathcal{F}_X}+\Delta)$-flop, and $\mathcal{O}\neq \mathcal{C}_{A,g}$ is not contained in the boundary of $\mathcal{E}_A(V)$.
    \end{enumerate}
\end{lemma}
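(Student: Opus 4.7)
The plan is to produce $\pi$ from the Mori chamber structure of Theorem~\ref{Lsm1} and then classify its type by the combinatorics of the wall $\mathcal{O}$, mirroring the proof of Lemma 3.4 in \cite{HM09} but using the foliated base-point-free Lemma~\ref{BPF} where needed.

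\textbf{Step 1: extracting a contraction out of $X$.} Since $\dim V=2$ and $\mathcal{C}_{A,f}$ is two dimensional, it spans $V$, so Theorem~\ref{Lsm1}(3) forces $f$ to be birational and $X$ to be $\mathbb{Q}$-factorial. Set $\Delta=f_*\Theta$. The inclusion $\Theta\in \mathcal{C}_{A,f}=\overline{\mathcal{A}_{A,f}}$ makes $K_{\mathcal{F}_X}+\Delta$ a limit of ample classes on $X$, hence nef, and Lemma~\ref{BPF} upgrades this to semiampleness. Let $\alpha\colon X\to W$ denote the resulting ample model. If $h$ indexes the chamber with $\Theta\in \mathcal{A}_h$, then $W=X_h$ and $f_h=\alpha\circ f$. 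Since $\mathcal{O}\subseteq \mathcal{C}_{A,f}\cap \mathcal{C}_{A,h}$ is one dimensional inside the two-dimensional $\mathcal{C}_{A,f}$, Theorem~\ref{Lsm1}(4) yields $\rho(X/W)=1$, so $\alpha$ is an extremal contraction: divisorial, small, or a Mori fibration.

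\textbf{Step 2: extracting $\pi\colon X\dashrightarrow Y$.} Because $\Theta\in \mathcal{C}_{A,g}$ as well, Theorem~\ref{Lsm1}(2) applied to $\mathcal{C}_{A,g}$ and $\mathcal{A}_h$ produces a contraction morphism $\beta\colon Y\to W$ with $f_h=\beta\circ g$. Define $\pi=\alpha$ when $\beta$ is an isomorphism and $\pi=\beta^{-1}\circ\alpha$ otherwise; the factorization $g=\pi\circ f$ follows from $\alpha\circ f=\beta\circ g$, and $\pi$ is a rational contraction because both $\alpha$ and $\beta$ are. Moreover $\pi$ is $(K_{\mathcal{F}_X}+\Delta)$-trivial: if $H_W$ is the ample class on $W$ with $K_{\mathcal{F}_X}+\Delta=\alpha^*H_W$, then on the domain of $\pi$ we have $\alpha^*H_W=\pi^*\beta^*H_W$.

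\textbf{Step 3: matching the four cases.} We distinguish by the type of $\alpha$ and the position of $\mathcal{O}$. If $\alpha$ is divisorial, the opposite chamber across $\mathcal{O}$ is again two dimensional, so $\mathcal{O}\neq \mathcal{C}_{A,g}$, $Y$ is $\mathbb{Q}$-factorial by Theorem~\ref{Lsm1}(3), and $\beta$ must be an isomorphism, giving $\pi=\alpha$ divisorial with $\rho(X)=\rho(Y)+1$; this is case (1)(a)(i). If $\alpha$ is small and $\mathcal{O}\neq \mathcal{C}_{A,g}$, then $\mathcal{C}_{A,g}$ is two dimensional, $\beta$ is also small, $\rho(X)=\rho(Y)$, and $\pi$ is a flop for $K_{\mathcal{F}_X}+\Delta$, namely case (2). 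If $\alpha$ is small with $\mathcal{O}=\mathcal{C}_{A,g}$, then $W=Y$ and $\pi=\alpha$ is a small morphism with $\rho(X)=\rho(Y)+1$, case (1)(a)(ii). Finally, if $\alpha$ is a Mori fibration, then $\dim W<\dim X$, so $K_{\mathcal{F}}+\Theta$ fails to be big, $\mathcal{O}$ lies in the boundary of $\mathcal{E}_A(V)$, $\mathcal{O}=\mathcal{C}_{A,g}$, and $\pi=\alpha$ is the required MFS, case (1)(b). The main obstacle will be the bookkeeping in this step: using $\rho(X)\geq\rho(Y)$ to orient $\pi$ correctly (especially in the small case, where one must choose between $\alpha$ and $\alpha^{-1}$) and excluding parasitic configurations such as divisorial $\alpha$ with $\mathcal{O}=\mathcal{C}_{A,g}$ or a Mori fibration $\alpha$ with $\mathcal{O}\neq \mathcal{C}_{A,g}$.
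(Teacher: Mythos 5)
Your overall strategy matches the paper's: realize the ample model $W$ of $K_{\mathcal{F}}+\Theta$, use Theorem~\ref{Lsm1}(4) to get morphisms $X\to W$ and $Y\to W$ of relative Picard number at most one, and then classify. The paper, however, organizes the casework by first comparing Picard numbers, and that ordering is essential. Since $\rho(X)-\rho(W)\leq1$ and $\rho(Y)-\rho(W)\leq1$ with $\rho(X)\geq\rho(Y)$, the only possibilities are $\rho(X)=\rho(Y)+1$ (forcing $q$ to be an isomorphism, $\pi=p$, $W=Y$) or $\rho(X)=\rho(Y)$ (forcing both contractions to have relative Picard number one). Only \emph{after} establishing $W=Y$ does one read off the sub-cases from Theorem~\ref{Lsm1}(3): $\pi$ divisorial $\Leftrightarrow$ $Y$ is $\mathbb{Q}$-factorial $\Leftrightarrow$ $\mathcal{C}_{A,g}$ spans $V$ $\Leftrightarrow$ $\mathcal{O}\neq\mathcal{C}_{A,g}$. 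In the second case one argues $W$ cannot be $\mathbb{Q}$-factorial (otherwise $\mathcal{C}_{A,h}$ would span $V$, which is impossible since it must lie in the one-dimensional $\mathcal{O}$), so both $p$ and $q$ are small and $\pi$ is a flop.

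Your Step~3 inverts this order and in the divisorial case the reasoning becomes circular. You assert that if $\alpha$ is divisorial then ``the opposite chamber across $\mathcal{O}$ is again two dimensional, so $\mathcal{O}\neq\mathcal{C}_{A,g}$, $Y$ is $\mathbb{Q}$-factorial... and $\beta$ must be an isomorphism,'' but none of this follows from the type of $\alpha$ alone: a priori both $\alpha$ and $\beta$ could be divisorial contractions onto a $\mathbb{Q}$-factorial $W$ with $\rho(X)=\rho(Y)$, and you have not excluded this. What rules it out is precisely the observation that $\mathcal{A}_{A,h}$ (near $\Theta$) lies inside $\mathcal{O}$ whenever both $p$ and $q$ are nontrivial contractions, so $\mathcal{C}_{A,h}$ cannot span the two-dimensional $V$, hence $W$ is not $\mathbb{Q}$-factorial — this is the engine behind the paper's case $\rho(X)=\rho(Y)$, and the hypothesis $\rho(X)\geq\rho(Y)$ must be invoked before, not after, concluding that $\beta$ is an isomorphism. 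You flag this as ``bookkeeping,'' but it is exactly where the proof's force lies; reorganizing Step~3 to follow the paper's Picard-number-first dichotomy closes the gap cleanly.
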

\begin{proof}
    As $V$ and $\mathcal{C}_{A,f}$ are both two-dimensional, by Theorem \ref{Lsm1}(3), $f$ is birational and $X$ is $\mbQ$-factorial. Let $h:Z\dashrightarrow W$ be the ample model corresponding to $K_{\mathcal{F}}+\Theta$. Since $\Theta$ is not in the boundary of $\mathcal{L}_A(V)$, if $\Theta$ belongs to the boundary of $\mathcal{E}_A(V)$, then $K_{\mathcal{F}}+\Theta$ is not big and so $h$ is not birational. As $\mathcal{O} \subseteq \mathcal{C}_{A,f} \cap \mathcal{C}_{A,g}$, there are morphisms $p:X\rightarrow W$ and $q:Y\rightarrow W$ of relative Picard number at most one by Theorem \ref{Lsm1}(4). So there are two possibilities: $\rho(X)=\rho(Y)+1$ or $\rho(X)=\rho(Y)$.
\begin{itemize}

   \item  Suppose we are in the first case. Then $q$ is the identity map and $\pi=p:X\rightarrow Y$ is a contraction such that $g=\pi\circ f$. If $\pi$ is birational, then $h$ is also birational, so $\mathcal{O}$ is not contained in the boundary of $\mathcal{E}_A(V)$. If $\pi$ is divisorial, then $Y$ is $\mbQ$-factorial, so $\mathcal{O}\neq \mathcal{C}_{A,g}$ by Theorem \ref{Lsm1}(3). If $\pi$ is a small contraction, then $Y$ is not $\mbQ$-factorial, so $\mathcal{C}_{A,g}=\mathcal{O}$ must be one-dimensional. If $\pi$ is a Mori fiber space, then $\mathcal{O}$ is contained in the boundary of $\mathcal{E}_A(V)$ and $\mathcal{O}=\mathcal{C}_{A,g}$.\\

   \item  Now suppose we are in the second case, $\rho(X)=\rho(Y)$. Then we have $\rho(X/W)=\rho(Y/W)=1$. Again, by Theorem \ref{Lsm1}(4), $p$ and $q$ cannot be divisorial, as the one-dimensional set $\mathcal{O}$ cannot span the two-dimensional space $V$. Thus $W$ is not $\mbQ$-factorial. Hence $p$ and $q$ are small contractions and $\pi$ is a $(K_{\mathcal{F}_X}+\Delta)$-flop.\\
    \end{itemize}
\end{proof}

We now define the setup for the main result. Let $\Theta=A+B$ be a point on the boundary of $\mathcal{E}_A(V)$ but in the interior of $\mathcal{L}_A(V)$. Let $\mathcal{T}_1, \mathcal{T}_2, \dots, \mathcal{T}_k$ be the two-dimensional polytopes $\mathcal{C}_i$ which contain $\Theta$. After possibly reordering, we assume that the intersections $\mathcal{O}_0 = \mathcal{T}_1 \cap \partial\mathcal{E}_A(V)$, $\mathcal{O}_k = \mathcal{T}_k \cap \partial\mathcal{E}_A(V)$, and $\mathcal{O}_i=\mathcal{T}_i\cap \mathcal{T}_{i+1}$ (for $i=1, \dots, k-1$) are all one-dimensional. Let $f_i:Z\dashrightarrow X_i$ be the rational contractions associated to $\mathcal{T}_i$ and $g_i:Z\dashrightarrow S_i$ be the rational contractions associated to $\mathcal{O}_i$. Set $f=f_1:Z\dashrightarrow X$ and $g=f_k:Z\dashrightarrow Y$. Let $\phi:X\rightarrow S=S_0$ and $\psi:Y\rightarrow T=S_k$ be the induced morphisms (cf. Fig. $1$.).

\begin{theorem}[cf. {\cite[Theorem $3.7$]{HM09}}]\label{SL1}
    Suppose $\Phi$ is any divisor on $Z$ such that $(\mathcal{F},\Phi)$ is an F-dlt pair with $\lfloor \Phi\rfloor=0$ and $\Theta-\Phi$ is ample. Then $\phi$ and $\psi$ are two Mori fiber spaces that are outputs of a $(K_{\mathcal{F}}+\Phi)$-MMP. If $\Theta$ is contained in at least two polytopes $\mathcal{T}_i$ ($k \ge 2$), they are connected by a Sarkisov link.
\end{theorem}
\begin{proof}
    The incidence relations between the polytopes yield birational maps as illustrated in the diagram below. For instance, since $X=X_1$ and $X'=X_2$ are both log terminal models for any $\Delta \in \mathcal{O}_1$, there is a map $p:X' \dashrightarrow X$. Similarly, there is a map $q:Y' \dashrightarrow Y$.
    \begin{figure}[h!]
        \centering
        \begin{tikzcd}
            X' \arrow[rr, dashed] \arrow[d, "p"', dashed] & & Y' \arrow[d, "q", dashed] \\
            X \arrow[d, "\phi"'] & & Y \arrow[d, "\psi"] \\
            S \arrow[dr, "s"'] & & T \arrow[dl, "t"'] \\
            & R &
        \end{tikzcd}
        \caption{Incidence relations induced by the polytopes}
        \label{fig:diagram}
    \end{figure}

    Let $R$ be the target of the ample model for $K_{\mathcal{F}}+\Theta$. Since $Z\dashrightarrow S_0$ and $Z\dashrightarrow S_k$ are rational contraction associated to $\mathcal{O}_0$ and $\mathcal{O}_k$ respectively, and both $\mathcal{O}_0$ and $\mathcal{O}_k$ are contained in the $\partial \mathcal{E}_A(V)$, by Lemma \ref{LSM2}, $\phi:X\rightarrow S$ and $\psi:Y\rightarrow T$ are Mori fiber spaces.
    
  Pick points $\Theta_1$ and $\Theta_k$ in the interiors of $\mathcal{T}_1$ and $\mathcal{T}_k$ respectively, both sufficiently close to $\Theta$, such that the divisors $\Theta_1-\Phi$ and $\Theta_k-\Phi$ are ample. Since $\lfloor\Phi\rfloor=0$, by \cite[Lemma $3.24$]{CS21} we can find an ample divisor $H$ such that $(\mathcal{F},\Phi+H)$ is an F-dlt pair and $K_{\mathcal{F}}+\Phi+H$ is ample. Furthermore, we can find a positive real number $\mu<1$ such that $\mu H\sim_{\mbR}\Theta_1-\Phi$. Note that $f_1=f$ is the ample model of $K_{\mathcal{F}}+\Theta_1+\mu H$. We can then pick a scalar $\lambda<\mu$ that is sufficiently close to $\mu$ such that $f$ is $(K_{\mathcal{F}}+\Phi+\lambda H)$-negative and remains the ample model for $K_{\mathcal{F}}+\Phi+\lambda H$.

It follows that $f$ is the unique log terminal model of $K_{\mathcal{F}}+\Phi+\lambda H$. This shows that if we run a $(K_{\mathcal{F}}+\Phi)$-MMP with scaling of $H$, the process yields the rational map $f$ when the scaling factor is $\lambda$. Since $X$ and $Y$ are both $\mbQ$-factorial varieties, it follows that the Mori fiber spaces defined by $\phi$ and $\psi$ are end products of a $(K_{\mathcal{F}}+\Phi)$-MMP. Let $\Delta=f_*\Theta$. Then the divisor $K_{\mathcal{F}_X}+\Delta$ is numerically trivial over $R$.
    
    By Theorem \ref{Lsm1}(4), for the contractions $X_i\rightarrow R$, we have $\rho(X_i/R)\leqslant 2$. If $\rho(X_i/R)=1$ and $X_i\rightarrow R$ is a Mori fiber space, then by Lemma \ref{LSM2} a facet of $\mathcal{T}_i$ is contained in the boundary of $\mathcal{E}_A(V)$, so $i=1$ or $k$. Thus $X_i\dashrightarrow X_{i+1}$ is a flop for $1 < i < k-1$, and $\rho(X_i/R)=2$ for $2\leqslant i\leqslant k-1$.
    Let $s:S\rightarrow R$ and $t:T\rightarrow R$ be the induced contractions. Lemma \ref{LSM2}(1) implies a dichotomy for $p$: either $p$ is a divisorial contraction and $s$ is the identity (if $\rho(X/R)=1$), or $p$ is a flop and $s$ is not the identity (if $\rho(X/R)=2$). A similar dichotomy holds for $q$ and $t$. This gives four cases:
    \begin{enumerate}
        \item If $s$ and $t$ are the identity, then $p$ and $q$ are divisorial contractions. This is a Sarkisov \textbf{link of type II}.
        \item If $s$ is the identity and $t$ is not, then $p$ is a divisorial contraction and $q$ is a flop. This is a \textbf{link of type I}.
        \item If $t$ is the identity and $s$ is not, we have a \textbf{link of type III}.
        \item Finally, suppose that neither $s$ nor $t$ is the identity. In this case, both $p$ and $q$ are flops.

    Let us assume, to seek a contradiction, that $s$ is a divisorial contraction. Let $F$ be the divisor contracted by $s$, and let $E$ be its inverse image in $X$. Since $\rho(X/S)=1$, we have $\phi^*(F)=mE$ for some natural number $m$. Using the same argument as in the proof of Lemma \ref{BPF}, after possibly replacing $\Delta$, we can ensure that $(\mathcal{F}_X,\Delta+\delta E)$ is an F-dlt pair for a sufficiently small $\delta>0$. As $K_{\mathcal{F}_X}+\Delta$ is semi-ample, the stable base locus is $E = \bold{B}(K_{\mathcal{F}_X}+\Delta+\delta E/R)$. If we run the $(K_{\mathcal{F}_X}+\Delta+\delta E)$-MMP over $R$, we obtain a birational contraction $X\dashrightarrow W$ which is a Mori fiber space over $R$. Since $\rho(X/R)=2$, we must have $W=Y$, which gives a \textbf{link of type III}. This is a contradiction.

    Therefore, $s$ cannot be a divisorial contraction. Similarly, $t$ can never be a divisorial contraction.

    This leaves two possibilities for the maps $s$ and $t$:
    \begin{itemize}
        \item If $s$ is a Mori fiber space, then $R$ is $\mbQ$-factorial, and so $t$ must be a Mori fiber space as well. This scenario corresponds to a link of \textbf{type IV$_m$}.
        \item If $s$ is a small contraction, then so is $t$. This gives a link of \textbf{type IV$_s$}.
    \end{itemize}.
    \end{enumerate}
\end{proof}
In the next section, we finally prove that we can set up such a two-dimensional space $V$ for which all the desired properties hold.
\section{Proof of the Main Theorem}
In this section, we prove Theorem \ref{MT}.

\begin{lemma}[cf. {\cite[Lemma $4.1$]{HM09}}]\label{FL}
    Let $\phi:X\rightarrow S$ and $\psi:Y\rightarrow T$ be two Sarkisov-related Mori fiber spaces corresponding to two $\mbQ$-factorial F-dlt pairs $(\mathcal{F}_X,\Delta)$ and $(\mathcal{F}_Y,\Gamma)$. Then we can find a smooth projective variety $Z$ with a co-rank-one foliation $\mathcal{F}$, two birational contractions $f:Z\dashrightarrow X$ and $g:Z \dashrightarrow Y$, an F-dlt pair $(\mathcal{F},\Phi)$, an ample divisor $A$ on $Z$, and a two-dimensional rational affine subspace $V$ of $WDiv_{\mbR}(Z)$ such that:
    \begin{enumerate}
        \item if $\Theta\in \mathcal{L}_A(V)$, then $\Theta-\Phi$ is ample;
        \item the sets $\mathcal{A}_{A,\phi\circ f}(V)$ and $\mathcal{A}_{A,\psi\circ g}(V)$ are not contained in the boundary of $\mathcal{L}_A(V)$;
        \item the closures $\mathcal{C}_{A,f}(V)$ and $\mathcal{C}_{A,g}(V)$ are two-dimensional;
        \item the closures $\mathcal{C}_{A,\phi\circ f}(V)$ and $\mathcal{C}_{A,\psi\circ g}(V)$ are one-dimensional; and
        \item $V$ satisfies the conditions of Theorem \ref{Lsm1}.
    \end{enumerate}
\end{lemma}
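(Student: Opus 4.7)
The strategy parallels \cite[Lemma 3.6]{HM09}, adapted to the foliated F-dlt setting using the finiteness and log geography results of Section 3.

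First, I would construct the ambient $Z$. Since $\phi$ and $\psi$ are log MMP related, there is an initial F-dlt pair $(Z_0,\mathcal{F}_0,\Phi_0)$ with $\lfloor\Phi_0\rfloor=0$ and birational contractions $Z_0\dashrightarrow X$, $Z_0\dashrightarrow Y$ arising from the two MMPs. Replacing $Z_0$ by a foliated log resolution that simultaneously resolves the graphs of both MMPs and the divisors $\Phi_0,\Delta,\Gamma$, I obtain a smooth projective $Z$, the transformed co-rank one foliation $\mathcal{F}$, and induced birational contractions $f:Z\dashrightarrow X$ and $g:Z\dashrightarrow Y$. Pulling back $\Phi_0$ and adjusting with exceptional divisors via Theorem \ref{CGT1}, I may take the boundary $\Phi$ on $Z$ to be F-dlt with $\lfloor\Phi\rfloor=0$ and such that $f,g$ remain outputs of a $(K_{\mathcal{F}}+\Phi)$-MMP.

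Second, I would identify two ``seed'' divisors $\Theta_X,\Theta_Y\in\mathrm{WDiv}_{\mathbb{R}}(Z)$ that pin down the $2$-dimensional $V$. Using that $\phi$ is a Mori fiber contraction of relative Picard rank one, pick a general ample $\mathbb{Q}$-divisor $H_S$ on $S$ and $\epsilon>0$ small so that $\Delta_1:=\Delta+\epsilon\,\phi^{*}H_S$ satisfies $K_{\mathcal{F}_X}+\Delta_1\sim_{\mathbb{Q},S}0$ and $(\mathcal{F}_X,\Delta_1)$ is still F-dlt; define $\Gamma_1$ on $Y$ analogously. Let $\Theta_X$ be any F-dlt lift of $\Delta_1$ to $Z$ for which $f$ is $(K_{\mathcal{F}}+\Theta_X)$-non-positive, and similarly $\Theta_Y$. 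Take $V$ to be the $2$-dimensional affine subspace of $\mathrm{WDiv}_{\mathbb{R}}(Z)$ through $\Theta_X$ and $\Theta_Y$ (adding one more rational direction if the two are linearly dependent); Theorem \ref{CGT1} then lets me replace $\Theta_X,\Theta_Y$ by $\mathbb{Q}$-linearly equivalent F-dlt representatives so that $V$ is defined over the rationals.

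Third, I would choose $A$ to be a general ample $\mathbb{Q}$-divisor on $Z$, having no component in common with any element of $V$, and sufficiently positive that $\Theta-\Phi$ is ample for every $\Theta\in\mathcal{L}_A(V)$; this gives (1). Conditions (2)--(4) then follow from the log geography of Theorem \ref{LGLCM} together with the finiteness of log terminal models of Theorem \ref{Finiteness}: $\Theta_X$ lies in $\mathcal{A}_{A,\phi\circ f}$ and in the interior of $\mathcal{L}_A(V)$, giving (2); the chamber $\mathcal{C}_{A,f}$ is two-dimensional because small perturbations of $\Theta_X$ within $V$ still admit $f$ as a log terminal model (stability under F-dlt perturbation), giving (3); and $\mathcal{C}_{A,\phi\circ f}$ is one-dimensional because the requirement that $\phi\circ f$ be the ample model cuts out the hyperplane $\{K_{\mathcal{F}}+\Theta\equiv_{S}0\}$ inside $V$, giving (4). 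Condition (5) is then immediate: $Z$ is $\mathbb{Q}$-factorial, $\mathcal{F}$ is co-rank one, $V$ is rational and $2$-dimensional, $A$ is general ample, and $\Phi$ provides the F-dlt reference divisor required by Theorem \ref{Lsm1}.

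The main obstacle is arranging (3) and (4) simultaneously while retaining the spanning hypothesis embedded in (5): the chambers $\mathcal{C}_{A,f}$, $\mathcal{C}_{A,g}$ must each be full-dimensional in $V$, whereas $\mathcal{C}_{A,\phi\circ f}$ and $\mathcal{C}_{A,\psi\circ g}$ must be exactly of codimension one, and the directions swept out by $V$ must be compatible with the N\'eron--Severi condition of Theorem \ref{Lsm1}(3). This forces a careful choice of $\Theta_X,\Theta_Y$ so that the segment between them crosses the relevant facets transversally, and the F-dlt preservation under perturbation supplied by Theorem \ref{CGT1} is used essentially at several stages.
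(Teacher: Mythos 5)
Your overall strategy (resolve, construct seed divisors, pass to a two-dimensional affine slice) is the right one and matches the paper's adaptation of \cite[Lemma~3.6]{HM09}, but the proposal leaves out the construction that actually makes conditions (2)--(4) verifiable, and at that point the argument has a genuine gap.

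The crux of the paper's proof is that it does \emph{not} build the two-dimensional $V$ directly from two seed divisors. Instead, it first picks ample $\mbQ$-divisors $H_1,\dots,H_k$ that \emph{span the N\'eron--Severi group} of $Z$, and works inside the larger affine subspace $V_0$ (the translate by $\Phi_0$ of the span of $H_1,\dots,H_k,F,G$). That spanning property is what makes condition (2) provable: since $G\equiv\sum h_iH_i$, one can replace $\Phi_0+F+H$ by the numerically equivalent divisor $\Phi_0+F+\delta G+H-\delta\sum h_iH_i\in\mathcal{L}_A(V_0)$ for small $\delta>0$, which shows that $\mathcal{A}_{A,\phi\circ f}(V_0)$ is not confined to the boundary. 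Your proposal simply asserts that ``$\Theta_X$ lies in $\mathcal{A}_{A,\phi\circ f}$ and in the interior of $\mathcal{L}_A(V)$,'' but gives no reason why the interior condition should hold for a $V$ built as the affine span of two divisors; without an ambient $V_0$ spanning $\NS(Z)$ and the numerical-perturbation trick, there is no mechanism forcing $\mathcal{A}_{A,\phi\circ f}$ off the boundary. Only after establishing this in $V_0$ does the paper cut down to a two-dimensional $V$ (a small general perturbation of the span of $F+H-A$ and $F+G-A$), inheriting the desired properties. Note also that two points determine a line, not a plane, so ``the $2$-dimensional affine subspace through $\Theta_X$ and $\Theta_Y$'' is not well-defined as stated even before the deeper issue.

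A secondary gap: you appeal to Theorem~\ref{Lsm1} for condition (5), but part (3) of that theorem has the extra hypothesis that $V$ spans $\NS(Z)$, which cannot hold for a $2$-dimensional $V$. The paper's construction handles this precisely because the relevant spanning and dimension counts are carried out in $V_0$ first; your version gives no substitute for that step. You do flag in your final paragraph that ``arranging (3) and (4) simultaneously while retaining the spanning hypothesis'' is the main obstacle, but acknowledging the difficulty is not the same as resolving it --- that is exactly the content of the paper's explicit choices of $A,H_1,\dots,H_k,C,D,\Phi_0,F,G$, none of which appear in your argument.
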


\begin{proof}
    As $\phi$ and $\psi$ are Sarkisov-related, by definition, we can find a $\mbQ$-factorial threefold $Z$ with a co-rank-one foliation $\mathcal{F}$ and an F-dlt pair $(\mathcal{F},\Phi)$ with $\lfloor \Phi\rfloor=0$, such that there exist birational maps $f:Z\dashrightarrow X$ and $g:Z\dashrightarrow Y$ which are outcomes of a $(K_{\mathcal{F}}+\Phi)$-MMP.

    Let $p:W\rightarrow Z$ be a foliated log resolution of $(\mathcal{F},\Phi)$ that resolves the indeterminacies of $f$ and $g$ and satisfies $a(E,\mathcal{F},\Phi)>-\epsilon(E)$ for all $p$-exceptional divisors $E$. Let $\mathcal{G}$ be the pullback foliation on $W$. We may write
    \[
    K_{\mathcal{G}}+p_*^{-1}\Phi+\sum \delta_i\epsilon(E_i)E_i=p^*(K_{\mathcal{F}}+\Phi)+E'+\sum \delta_i\epsilon(E_i)E_i,
    \]
    where the $E_i$ are all the $p$-exceptional divisors and the $\delta_i>0$ are sufficiently small rational numbers such that $E'+\sum\delta_i\epsilon(E_i)E_i$ is effective. If we run a $(\mathcal{G},p_*^{-1}\Phi+\sum\delta_i\epsilon(E_i)E_i)$-MMP over $Z$, it terminates by contracting the exceptional locus of $p$, and we recover the pair $(\mathcal{F},\Phi)$. By replacing $(Z, \Phi)$ with this new pair on $W$, we can assume from the start that $Z$ is smooth, $(\mathcal{F},\Phi)$ is log smooth, and $f$ and $g$ are morphisms.

    Let $A, H_1, \dots, H_k$ be ample $\mbQ$-divisors forming a basis for the Néron-Severi group of $Z$. Let $H = A + \sum_{i=1}^k H_i$. Pick sufficiently ample divisors $C$ on $S$ and $D$ on $T$. Then, for a small enough rational number $0 < \delta < 1$, the divisors $-(K_{\mathcal{F}_X}+\Delta+\delta f_*H)+\phi^*C$ and $-(K_{\mathcal{F}_Y}+\Delta+\delta g_*H)+\psi^*D$ are both ample, and $K_{\mathcal{F}}+\Phi+\delta H$ is negative with respect to both $f$ and $g$. By replacing $H$ with $\delta H$, we may assume $\delta=1$. Now, we can find a $\mbQ$-divisor $\Phi_0\geqslant \Phi$ such that $A+(\Phi_0-\Phi)$, $-(K_{\mathcal{F}_X}+f_*(\Phi_0+H))+\phi^*C$, and $-(K_{\mathcal{F}_Y}+g_*(\Phi_0+H))+\psi^*D$ are all ample.

    Next, choose general ample $\mbQ$-divisors $F_1\geqslant 0$ and $G_1\geqslant 0$ such that $F_1\sim_{\mbQ}-(K_{\mathcal{F}_X}+f_*(\Phi_0+H))+\phi^*C$ and $G_1\sim_{\mbQ}-(K_{\mathcal{F}_Y}+g_*(\Phi_0+H))+\psi^*D$. Let $F=f^*F_1$ and $G=g^*G_1$. By choosing $F_1$ and $G_1$ appropriately, as in \cite[Lemma $3.24$]{CS21}, we can ensure that $(\mathcal{F},\Phi_0+H+F+G)$ is an F-dlt pair.

    Let $V_0$ be the affine subspace of $WDiv_{\mbR}(Z)$ defined as the translate of $\text{span}_{\mbR}\{H_1, \dots, H_k, F, G\}$ by $\Phi_0$. For any $\Theta=A+B\in \mathcal{L}_A(V_0)$, the divisor $\Theta-\Phi=(A+\Phi_0-\Phi)+(B-\Phi_0)$ is ample, because $B-\Phi_0$ is a positive linear combination of ample divisors and is therefore nef, while $A+\Phi_0-\Phi$ is ample by construction. Note that $\Phi_0+G+H\in \mathcal{A}_{A,\psi\circ g}(V_0)$, and $f$ and $g$ are weak log canonical models for $K_{\mathcal{F}}+\Phi_0+F+H$ and $K_{\mathcal{F}}+\Phi_0+G+H$ respectively. Thus, $V_0$ satisfies the conditions of Theorem \ref{Lsm1}.

    Since $\{H_i\}$ generates the Néron-Severi group, we can find constants $\lbrace h_i\rbrace$ such that $G \equiv \sum h_i H_i$. For a small enough $\delta>0$, the divisor $\Phi_0+F+\delta G+H-\delta(\sum h_i H_i)$ is in $\mathcal{L}_A(V_0)$ and is numerically equivalent to $\Phi_0+F+H$. This shows that $\mathcal{A}_{A,\phi\circ f}(V_0)$ is not contained in the boundary of $\mathcal{L}_A(V_0)$. A similar argument holds for $\mathcal{A}_{A,\psi\circ g}(V_0)$. It follows that $\mathcal{C}_{A,f}(V_0)$ and $\mathcal{C}_{A,g}(V_0)$ span $V_0$, while $\mathcal{C}_{A,\phi\circ f}(V_0)$ and $\mathcal{C}_{A,\psi\circ g}(V_0)$ span affine hyperplanes, since $\rho(X/S)=\rho(Y/T)=1$.

    Finally, let $V_1$ be the two-dimensional affine space spanned by $\Phi_0+F+H-A$ and $\Phi_0+G+H-A$, translated by $A$. Let $V$ be a small general perturbation of $V_1$ that is defined over $\mbQ$. This $V$ is a two-dimensional subspace of $V_0$ that inherits the required properties, thus satisfying all five conditions of the lemma.
\end{proof}

Finally, we combine these results to prove our main theorem.

\begin{proof}[Proof of Theorem \ref{MT}]
    Let $\phi:X \rightarrow S$ and $\psi:Y \rightarrow T$ be two foliated Mori fiber spaces that are log MMP-related. By Lemma \ref{FL}, we can construct a smooth projective threefold $Z$ with a co-rank-one foliation $\mathcal{F}$, an F-dlt pair $(\mathcal{F},\Phi)$, an ample divisor $A$, and a two-dimensional rational affine subspace $V$ satisfying the conditions of the lemma.

    Let $\Theta_0$ be a point in the interior of $\mathcal{C}_{A,\phi\circ f}(V)$ and let $\Theta_1$ be a point in the interior of $\mathcal{C}_{A,\psi\circ g}(V)$, such that both points are in the interior of $\mathcal{L}_A(V)$. Since $\mathcal{E}_A(V)$ is a connected subset of the two-dimensional rational polytope $\mathcal{L}_A(V)$, its boundary is a union of one-dimensional segments.
    
    The boundary of $\mathcal{E}_A(V)$ must contain a path connecting the regions around $\Theta_0$ and $\Theta_1$. We can choose a path that lies entirely in the interior of $\mathcal{L}_A(V)$. This path corresponds to divisors $K_{\mathcal{F}}+\Theta$ that are pseudo-effective but not big. As we traverse this path, we move from one two-dimensional polytope $\mathcal{C}_{A,f_i}(V)$ to another. Each time we cross from one such polytope to the next, we pass through a common one-dimensional face. Let the sequence of these intersections be defined by points $\Theta_i$ as in the setup for Theorem \ref{SL1}.
    
    By Theorem \ref{SL1}, each transition from a model $X_i$ to $X_{i+1}$ corresponds to a Sarkisov link. Chaining these links together provides the required factorization of the rational map between $X$ and $Y$. Therefore, the two foliated Mori fiber spaces are connected by a sequence of foliated Sarkisov links.
\end{proof}

\section{Examples}
In this section, we explore some examples of the foliated Sarkisov program to provide evidence that it is more rigid than the classical Sarkisov program and to understand its relationship with the birational automorphism groups of foliations.

We begin by defining the birational automorphism group of a foliation.
\begin{definition}
    Let $X$ be a normal projective variety, let $\mathcal{F}$ be a foliation on $X$, and let $\phi:X\dashrightarrow X$ be a birational automorphism of $X$. Then $\phi$ is said to be a birational automorphism of $\mathcal{F}$ if there exists a Zariski-open set $U\subset X$ on which $\phi$ is an isomorphism and $\phi_*(\mathcal{F}|_U) = \mathcal{F}|_U$. We denote this group by $\BirAut_{\mathcal{F}}(X)$.
\end{definition}

\begin{remark}
    Note that by definition, $\BirAut_{\mathcal{F}}(X)$ is a subgroup of $\BirAut(X)$.
\end{remark}

\begin{example}
    Consider the foliation on $\mbP^1\times \mbP^1$ induced by the projection $f:\mbP^1\times \mbP^1\rightarrow \mbP^1$. The blow-up of a point $x \in \mbP^1\times \mbP^1$, which we denote by $W$, is isomorphic to the blow-up of $\mbP^2$ at two distinct points. This gives the following Sarkisov link:
    \begin{equation}
        \begin{tikzcd}
            & W \arrow[ld,"p"] \arrow[rd,"q"] & \\
            \mbP^1\times \mbP^1 \arrow[d,"f"] & & \mbP^2 \arrow[d,"g"] \\
            \mbP^1 & & \text{pt}
        \end{tikzcd}
    \end{equation}
    If we consider $\mbP^1\times\mbP^1\rightarrow \mbP^1$ and $\mbP^2\rightarrow \text{pt}$ as classical Mori fiber spaces, they are MMP-related. However, the map $q:W \rightarrow \mbP^2$ contracts strict transforms of the two rulings of $\mbP^1\times\mbP^1$ passing through $x$. One of them is transverse to the foliation induced by $f:\mbP^1\times\mbP^1\rightarrow \mbP^1$, so, this divisor is also transverse to the pullback foliation on $W$. Note that, by \cite[Theorem $3.31$]{CS21}, foliated MMP only contracts curves tangent to the foliation. Hence, these two Mori fiber spaces are not foliated MMP-related.\\
    In particular $q$ is composition of two classical Sarkisov links, connecting $W\rightarrow \mathbb{F}_1$ and $\mathbb{F}_1\rightarrow \mbP^2$. However, $W\rightarrow \mbP^2$ contracts a transverse divisor, which is not permitted in the foliated Sarkisov program. So, The only possible foliated Sarkisov links starting from $(\mbP^1\times\mbP^1, f)$ are sequences of links of Type 2, ending with a scroll $\mbF_n\rightarrow \mbP^1$ for some $n\geq 0$.
\end{example}

With this example in hand and inspired by \cite{BLZ19}, we define a foliated version of $\BirMori(X)$.
\begin{definition}
    Let $X$ be a normal projective variety and $\mathcal{F}$ be a co-rank one foliation on $X$. Let $X\rightarrow Y$ be a foliated Mori fiber space. We define $\BirMori_{\mathcal{F}}(X/Y)$ to be the groupoid of all birational maps between foliated Mori fiber spaces that can be decomposed into a sequence of foliated Sarkisov links (as in Theorem \ref{MT}).
\end{definition}

In the classical Sarkisov program, if two Mori fiber spaces with $\mbQ$-factorial terminal singularities are birational to each other, they are automatically the outputs of two different MMPs starting from the same variety. However, in the case of foliations, if $(X_1/Y_1,\mathcal{F}_1)$ and $(X_2/Y_2,\mathcal{F}_2)$ are two foliated Mori fiber spaces with $X_1$ and $X_2$ smooth, it is not clear whether they are end products of two MMPs starting from a common foliated variety $(W,\mathcal{F}_W)$. The issue is, we don't know the existence of a common foliated log resolution that only extracts exceptional divisors over foliated klt centers. Thus, in the definition of $\BirMori_{\mathcal{F}}(X)$, we must impose the condition that the foliated Mori fiber spaces are connected by foliated Sarkisov links.

Using the argument in the above example, it is easy to see that the groupoid associated with the foliated Mori fiber space $\mbP^1\times\mbP^1\rightarrow \mbP^1$ contains all Mori fiber spaces $\mathbb{F}_n\rightarrow \mbP^1$ for $n\geq 1$, but not $\mbP^2\rightarrow \text{pt}$. Hence, $\BirMori_{\mathcal{F}}(\mbP^1\times \mbP^1)$ is a strict sub-groupoid of $\BirMori(\mbP^1\times\mbP^1)$. In fact, as described in \cite[Remark $3.13$]{BL09}, this is precisely the de Jonqui\`eres group, which is isomorphic to $\mbP GL(2,\mbC(x))\rtimes \mbP GL(2,\mbC)$.\\

Next, we show some limitations of the foliated Sarkisov program for F-dlt pairs. Before going into the example we recall that any fano foliation on $\mbP^2$ is isomorphic to the radial foliation on $\mbP^2$, i.e. it is given by the rational map $[x_0:x_1:x_2]\dashrightarrow [x_1:x_2]$ to $\mbP^1$ (cf. \cite[Chapter $2$, Section $3$, Example $(2)$]{Bru}).
\begin{example}
    Consider a foliation $\mathcal{F}$ on $\mbP^2$ such that $K_{\mathcal{F}}$ is not pseudo-effective. Then $-K_{\mathcal{F}}$ must be ample, which implies that $\mathcal{F}$ is a Fano foliation. Such a foliation is isomorphic to the radial foliation on $\mbP^2$, and has strictly F-lc singularities; in particular, it has a dicritical singularity. It is therefore not possible to study birational automorphism groups of such foliations using the F-dlt Sarkisov program. To understand the issue, let us work with the only Fano foliation on $\mbP^2$ which is induced by the rational map $[x_0:x_1:x_2]\dashrightarrow [x_1:x_2]$ to $\mbP^1$. Suppose there exists a foliated Sarkisov link to a surface $S$ with a foliated Mori fiber structure $(S,\mathcal{F})\rightarrow T$, where $\mathcal{F}$ has non-dicritical singularities. $T$ cannot be a point as $\mathcal{F}$ has non-dicritical singularities, so $\dim T=1$. Hence, the only possibility is a link of type $1$. Which implies there exists a divisorial contraction $\pi:S\rightarrow \mbP^2$, which contracts only $\mathcal{F}$-invariant curves. However, since $\mathcal{F}$ has non-dicritical singularities and the starting Fano foliation on $\mbP^2$ has dicritical singularity, $\pi$ must contract an exceptional curve which is not $\mathcal{F}$-invariant, which is a contradiction.
   
\end{example}

This example shows that the F-dlt Sarkisov program is not sufficient to study birational automorphisms of the only Fano foliations on $\mbP^2$. To study them in a similar fashion, we would need to allow at least the contraction of $K_{\mathcal{F}}$-trivial curves and blow-ups along strictly F-lc centers.

In dimension three, the situation is more complicated as we do not have a full classification of Sarkisov links starting from $\mbP^3$. Noting that any Fano foliation on $\mbP^3$ must have at least a strictly F-lc singularity, we can easily discard some of the known links. We illustrate this for some of the links described in \cite{BL11}.

\begin{example}
    Consider the blow-up of a line $C$ in $\mbP^3$. This gives a classical link of type 1 (cf. Theorem \ref{MT}) with the target $Y$ being a $\mbP^2$-bundle over $T=\mbP^1$. The variety $Y$ is naturally endowed with a smooth foliation $\mathcal{F}_1$ induced by the projection $Y\rightarrow T$, which makes $Y\rightarrow T$ a foliated Mori fiber space. If this is a foliated link, the transformed foliation $\mathcal{F}_2$ on $\mbP^3$ would have to be a Fano foliation as the Picard rank of $\mbP^3$ is one. Note that, since the foliation defined by $Y\rightarrow T$ is algebraically integrable, by the construction of foliated Sarkisov program $\mathcal{F}_2$ is also algebraically integrable (cf. \cite{ACSS}). $\mathcal{F}_2$ is not defined by a morphism as it is a foliation on $\mbP^2$, so this is defined by a dominant rational map $\phi:\mbP^3\dashrightarrow C$. If we consider the closure of the general fibers of $\phi$, all of them intersect along the locus of indeterminacy of $\phi$, as they are divisors in $\mbP^3$, which implies $\mathcal{F}_2$ has dicritical singularities. However, as the starting foliation induced by $Y\rightarrow T$ is a foliation with non-dicritical singularities, there is a divisor which is not invariant by the induced foliation at some stage of the foliated Sarkisov program and gets contracted to $\mbP^3$, which leads to a contradiction.
   
\end{example}

\begin{example}
    Let $C\subset Q\subset\mbP^3$ be a curve of genus $1$ and degree $4$, contained in a smooth quadric surface $Q$ with bi-degree $(2,2)$. Blowing up $\mbP^3$ along $C$ again gives a classical link of type 1, with the target $Y$ being a del Pezzo fibration over $T=\mbP^1$. If we consider the foliation $\mathcal{F}_1$ induced by this fibration, then as in the previous example, this Sarkisov link cannot be foliated.
\end{example}

\begin{remark}
   Note that, by \cite[Proposition $5.3$]{AD13}, every Fano foliation has strictly log canonical singularities, hence, it has dicritical singularities. Starting from a foliation with non-dicritical singularities, the construction of foliated Sarkisov links ensures we can never reach a foliated Mori fiber space whose foliation has dicritical singularities, in particular we can never reach a Picard rank one Fano variety.
\end{remark}

The above examples and remarks indicate that if a rational threefold admits a foliated Mori fiber space structure with a co-rank one F-dlt foliation, then there are no foliated Sarkisov links connecting it to $\mbP^3$. Consequently, it has fewer foliated Sarkisov links than classical Sarkisov links. It would be interesting to see if we can construct non-trivial subgroups of the Cremona group of $\mbP^3$ using birational automorphism groups of such F-dlt foliations on smooth rational threefolds.
\section{Rank One Foliated Mori Fiber Spaces}
In this section, our goal is to find a relationship between two rank one foliated Mori fiber spaces. However, as explained in the introduction, a Bertini-type theorem does not hold for rank one foliations on normal projective threefolds (this is not true even for algebraically integrable rank one foliations). Hence, the techniques used to prove the Sarkisov Program for co-rank one foliations in the previous sections fail, as they are heavily dependent on such theorems (cf. \cite[Lemmas $3.26$ and $3.27$]{CS21} and Lemma \ref{FL}).

Instead of proving the Sarkisov program for rank one foliations, we provide a more rigid relationship for rank one foliated Mori fiber spaces in Theorem \ref{MT2}. Before moving on to prove Theorem \ref{MT2}, we prove an important proposition concerning the structure of foliated Mori fiber spaces for rank one foliation.
\begin{proposition}\label{MFS1}
Let $X$ be a normal projective threefold and where $\mathcal{F}$ is a rank one foliation with canonical singularities on $X$. If $\phi:(X,\mathcal{F})\rightarrow S$ is a foliated Mori fiber space, then $\dim S=2$.
\end{proposition}
\begin{proof}

There are three possibilities for $\dim S$, it can be $0,1$ or $2$. If $\dim S=0$, then $\mathcal{F}$ is a Fano foliation on $X$, and $\mathcal{F}$ has at least strictly log canonical singularities by \cite[Proposition $5.3$]{AD13}, which is a contradiction to the fact that $\mathcal{F}$ has canonical singularities. Now suppose $\dim S=1$. Let $X_s$ be a general fiber. Since $X_s$ is invariant by $\mathcal{F}$, we can consider the restricted foliation $\mathcal{F}_s$ on $X_s$. $\phi$ being a foliated Mori fiber space implies $\mathcal{F}_s$ is Fano foliation on $X_s$, in particular it has a strictly log canonical singularity. Since $\mathcal{F}$ is rank one and $-K_{\mathcal{F}}$ is pseudo-effective, by \cite[Theorem $3.1$]{LMX23} $\mathcal{F}$ is algebraically integrable. Let $f:X\dashrightarrow Y$ be the rational map defining the foliation $\mathcal{F}$. Let $f':X'\rightarrow Y'$ be a property-$(*)$ modification of $\mathcal{F}$ with birational map $\pi:X'\rightarrow X$ (cf. \cite[Definition $3.8$]{ACSS}), which exists by \cite[Theorem $3.10$]{ACSS}. Since $\phi$ is tangent to the foliation $\mathcal{F}$, we have a morphism $g:Y'\rightarrow S$. Let $X'_s$ be a general fiber of the morphism $g\circ f':X'\rightarrow S$. The transformed foliation $\mathcal{F}'$ on $X'$ is induced by the morphism $f'$, in particular, $\mathcal{F}'|_{X'_s}:=\mathcal{F}'_{s}$ is non-dicritical as this is a foliation on the surface $X'_s$ which is defined by the morphism $f'|_{X'_s}$. For a general point $s$, the induced morphism $\pi_s:X'_{s}\rightarrow X_s$ is a birational morphism, with exceptional divisors coming from restriction of the exceptional divisors of $\pi$. As $\mathcal{F}'_s$ is a rank one non-dicritical foliation, there does not exist a birational $\pi':X''\rightarrow X'_s$ such that $\pi'$ extracts an exceptional divisor which is not $\pi'^{-1}\mathcal{F'}_s$-invariant. Hence, $\mathcal{F}_s$ being a foliation with a dicritical singularity imply $\pi_s$ extract a divisor which is $\mathcal{F}'_s$ non-invariant. Let $E$ be the reduced sum of all $\pi$-exceptional divisors. As $\mathcal{F}$ is a rank one foliation with canonical singularities (hence non-dicritical by \cite[Lemma $2.6$]{CS20}), each component of $E$ is $\mathcal{F}'$-invariant, which implies each component of $E|_{X'_s}$ is also $\mathcal{F}'_s$-invariant for a general point $s\in S$. However, all the $\pi_s$-exceptional divisors come from $E|_{X'_s}$, which should contain at least one $\pi_s$-exceptional divisor which is not $\mathcal{F}'_s$-invariant. This is our sought-after contradiction.
\end{proof}
\begin{remark}
    Proposition \ref{MFS1} confirms that in the above setup the foliation $\mathcal{F}$ is in fact induced by the morphism $\phi:X\rightarrow S$. However, the proof of the above proposition does not depend on dimension of $X$ being $3$. For any foliated Mori fiber space $\phi:(X,\mathcal{F})\rightarrow S$ with $\mathcal{F}$ a rank one foliation, if the general fiber of $\phi$ has dimension $\geqslant 2$, then we obtain a contradiction as in the proof. Hence, for any foliated Mori fiber space $\phi:(X,\mathcal{F})\rightarrow S$ with respect to a rank one foliation $\mathcal{F}$ with canonical singularities, $\mathcal{F}$ is induced by $\mathcal{\phi}$.  
\end{remark}
    
\begin{proof}[Proof of Theorem \ref{MT2}]
    Let the birational maps $p:Z\dashrightarrow X$ and $q:Z\dashrightarrow Y$ be two $(Z,\mathcal{F},\Delta)$-MMP. Since these maps are compositions of divisorial contractions and flips, their indeterminacy loci consist of surfaces and curves that are tangent to the foliation $\mathcal{F}$ by \cite{CS20}. By blowing up these loci, we obtain a morphism $W\rightarrow Z$ that resolves the indeterminacies of both $p$ and $q$. We thus have birational morphisms $p':W\rightarrow X$ and $q':W\rightarrow Y$, with respective exceptional loci $E_{p'}$ and $E_{q'}$. As the pair $(\mathcal{F},\Delta)$ on $Z$ has canonical singularities, the foliation $\mathcal{F}$ is non-dicritical by \cite[Corollary III.i.4]{MP13}. Hence, $E_{p'}$ and $E_{q'}$ are both invariant under the pullback foliation $\mathcal{F}_W$.\\

    By Proposition \ref{MFS1}, we only need to deal with the case $\dim S=\dim T=2$. Suppose that the union of the exceptional loci $E_{p'}\cup E_{q'}$ dominates either $S$ or $T$. Hence $E_{p'}\cup E_{q'}$ intersects the general fiber of $W\rightarrow S$. However, all components of $E_{p'}\cup E_{q'}$ are invariant, which implies if it intersects a curve tanget to the foliation, it contains the generic point of the curve. So, $E_{p'}\cup E_{q'}$ contains generic point of all curve in the fiber of $W\rightarrow S$, which is absurd.\\

    Therefore, the general fibers of the maps $W\rightarrow S$ and $W\rightarrow T$ must be disjoint from $E_{p'}\cup E_{q'}$. This implies that there exists a common open set $V\subset S$ (and $V \subset T$ via the induced map) such that for any $s\in V$, the fiber $W_s$ is mapped isomorphically to a fiber of $\psi:Y\rightarrow T$. Let $U=\phi^{-1}(V)$. Then the restriction $U\rightarrow V$ is the required common open foliated Mori fiber space.
    
\end{proof}

\begin{remark}
    If we consider our foliated pair $(\mathcal{F},\Delta)$ on $Z$ to be terminal (which corresponds to a smooth foliation in this context), it might be possible to establish the usual Sarkisov program as in Theorem \ref{MT} by using the same techniques as \cite{BM97}. The main ingredients, such as the existence and termination of the MMP and the ACC for log canonical thresholds, are already established by \cite{CS20} and \cite{YC22}. However, if $(\mathcal{F},\Delta)$ has canonical singularities, it is not clear how to define the ``Sarkisov triplet" (cf. \cite[Definition $1.4$]{BM97}). Hence, this technique fails for rank one foliations with canonical singularities.
\end{remark}

\bibliographystyle{alpha}
\bibliography{bibliography}

\end{document}